\documentclass[11pt]{article}

\usepackage{amsmath,amssymb,amsthm,mathtools}
\usepackage{graphicx}
\usepackage{geometry}
\usepackage[autostyle=true]{csquotes}
\usepackage{bm}
\usepackage{enumitem}
\usepackage{booktabs}
\usepackage{algorithm}
\usepackage{algorithmic}

\usepackage{hyperref}
\hypersetup{
  colorlinks=true,
  linkcolor=blue,
  citecolor=blue,
  urlcolor=blue
}
\usepackage{cleveref}

\newcommand{\eps}{\varepsilon}
\newcommand{\dd}{\,\mathrm{d}}

\newcommand{\Vol}{\mathcal{V}}
\newcommand{\VQ}{\Vol_{Q}}                   
\newcommand{\dOm}{\dd \Omega}

\newcommand{\ErrV}{\mathrm{Err}_{\mathcal V}}
\newcommand{\ErrVint}{\ErrV^{\mathrm{int}}}
\newcommand{\ErrVbulk}{\ErrV^{\mathrm{bulk}}}

\newcommand{\Heav}{\Theta} 

\newtheorem{theorem}{Theorem}
\newtheorem{lemma}{Lemma}

\newtheorem{remark}{Remark}

\title{Third-Order Geometric-Volume Conservation in Cahn–Hilliard Models}
\author{Josef Musil$^\dagger$}
\date{\today}

\begin{document}
\maketitle

\begingroup
\renewcommand\thefootnote{$\dagger$}
\footnotetext{Institute of Thermomechanics, Czech Academy of Sciences, Dolej\v{s}kova 1402/5, 182 00 Prague 8, Czech Republic}
\endgroup

\begin{abstract}
Degenerate Cahn--Hilliard phase-field models provide a robust approximation of surface-diffusion--driven interface motion without explicit front tracking. In computations, however, the \emph{geometric} volume enclosed by the interface---e.g., the region where the order parameter $\phi\in[-1,+1]$ is positive---may drift at finite interface thickness, producing artificial shrinkage or growth even when the sharp-interface limit conserves volume.
Beyond interface-position-based mesh refinement (which reduces drift at increased computational cost), common remedies include enforcing volume conservation through global (nonlocal) constraints—typically implemented via Lagrange-multiplier corrections—and redesigning the dynamics to improve sharp-interface fidelity, ranging from doubly-degenerate surface-diffusion models (non-variational and energy-dissipative variants) to weighted-metric gradient-flow formulations with degenerate mobilities.
We revisit and extend the improved-conservation philosophy of Zhou et al., where one replaces classical mass conservation by the exact conservation of a designed monotone mapping $Q(\phi)$ that more accurately approximates a step function, yielding enhanced convergence of the geometric volume.
Building on this framework, we (i) carry out the matched-asymptotic analysis in the unscaled (physical) time formulation, (ii) derive a simplified, computable representation of the first-order inner correction to the interface profile and visualize its structure, and (iii) identify an \emph{integral-moment cancellation condition} that controls the leading geometric-volume defect. This mechanism becomes a practical design rule: we select regularization kernels within parameterized families---including exponential and Pad\'e-type---to reach effective higher-order behavior and satisfy the cancellation condition at moderate parameter values. As a result, the proposed kernels achieve \emph{formal third-order accuracy} in geometric-volume conservation with respect to interface thickness.
Finally, we describe an unconditional energy-dissipative numerical discretization that exactly preserves the discrete conserved quantity and improves robustness through a fully consistent update of the conserved mapping.
\end{abstract}

\paragraph{Keywords.}
Surface diffusion; degenerate Cahn--Hilliard; volume conservation; sharp-interface limit; inverse design; matched asymptotics.



\section{Introduction}\label{sec:intro}

Degenerate Cahn--Hilliard phase-field models \cite{cahn_1958,cahn_1961,elliott1996cahn} provide a robust diffuse-interface approximation of surface-diffusion--driven
interface motion without explicit front tracking. In the sharp-interface setting, surface diffusion evolves an embedded
interface $\Gamma(t)$ by a fourth-order geometric law (normal velocity proportional to the surface Laplacian of mean
curvature) while preserving the geometric volume enclosed by $\Gamma(t)$, a classical picture dating back to Mullins'
thermal-grooving theory and subsequent diffuse-interface formulations and sharp-interface analyses
\cite{mullins_1957,cahn_1996,lee_2016_sharp,salvalaglio_2021,bretin_2022}.

Phase-field formulations replace the sharp moving interface by an order parameter $\phi=\phi_\varepsilon\in[-1,1]$
whose transition layer has thickness $\mathcal{O}(\varepsilon)$. One typically has $\phi\approx +1$ in the
\enquote{inside} phase and $\phi\approx -1$ in the \enquote{outside} phase, with the diffuse interface located near
the zero level set. This diffuse description naturally accommodates topology changes and coupling to other physics,
but at finite interface thickness it raises a practical issue: the \emph{geometric} phase volumes
\eqref{eq:Omegaeps_pm}, associated with the thresholded field, may drift in time, producing artificial
shrinkage or growth even when the limiting sharp-interface dynamics conserves volume.
\begin{equation}\label{eq:Omegaeps_pm}
\Omega_{\varepsilon}^{+}(t):=\{x\in\Omega:\ \phi(x,t)>0\},\qquad
\Omega_{\varepsilon}^{-}(t):=\{x\in\Omega:\ \phi(x,t)<0\}.
\end{equation}
Denoting the diffuse interface by $\Gamma_{\varepsilon}(t):=\{x\in\Omega:\ \phi(x,t)=0\}$, we have
$\Omega=\Omega_\varepsilon^{+}(t)\cup\Gamma_\varepsilon(t)\cup\Omega_\varepsilon^{-}(t)$ and, in particular,
$|\Omega_\varepsilon^{+}(t)|+|\Omega_\varepsilon^{-}(t)|=|\Omega|$ (up to a set of measure zero).

\begin{figure}[h]
\centering
\includegraphics[width=\textwidth]{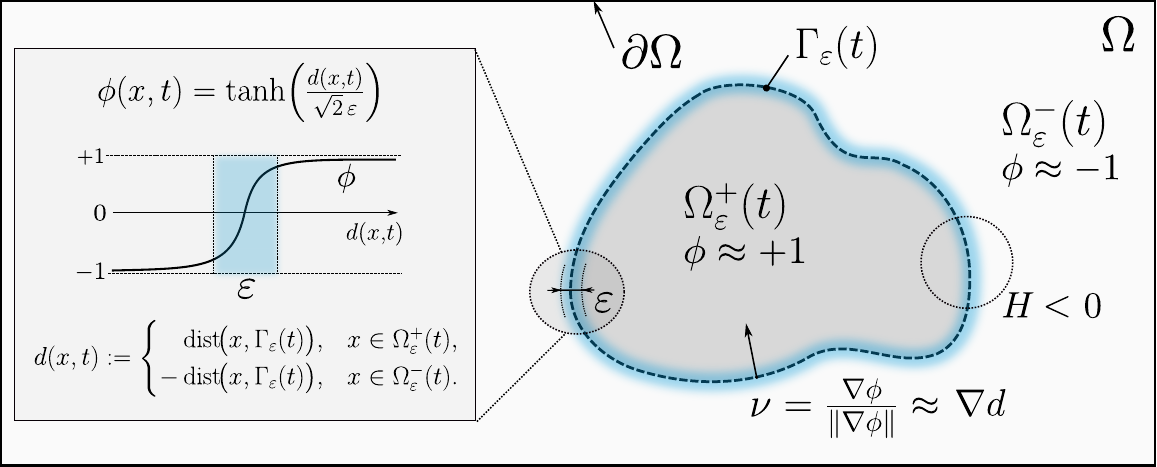}
\caption{Geometry and notation for the sharp interface limit.}
\label{fig:geometry}
\end{figure}

The origin of this drift is both modeling and numerical. Classical Cahn--Hilliard dynamics conserves a diffuse \enquote{mass}
(e.g.\ $\int_\Omega \phi\,\dOm$), which only approximates the sharp volume to first order in $\eps$ for typical inner
profiles; moreover, bulk diffusion and discretization errors can accumulate over long times when interfaces are not
sufficiently resolved, leading to spurious loss of small droplets and distorted coarsening dynamics
\cite{cahn_1996,lee_2016_sharp,musil_furst_2025}.
Interface-position-based mesh refinement can reduce these errors, but at increased computational cost.

Accordingly, the literature offers two broad remedy classes beyond refinement. A first class enforces volume conservation
through \emph{global (nonlocal) constraints}, typically implemented via Lagrange-multiplier corrections that depend on
domain integrals and adjust the chemical potential (or reaction term) so that a target volume proxy remains (nearly)
constant; such ideas are widely used across phase-field models, including conservative Allen--Cahn-type formulations and
related mass/volume-correction strategies in diffuse-interface fluid models
\cite{kim_2014_conservative_ac,shen_yang_wang_2013,wu_2021_review}.
A second class redesigns the \emph{local} model structure to improve sharp-interface fidelity while preserving an energy
dissipation principle. In the surface-diffusion setting, doubly-degenerate Cahn--Hilliard models introduce an additional
degeneracy to suppress bulk diffusion; a widely used non-variational version performs well computationally but lacks an
underlying energy, while variational (energy-dissipative) variants restore a gradient-flow structure
\cite{salvalaglio_2021}. Building on the variational viewpoint, weighted-metric gradient-flow formulations with
(two) degenerate mobilities have been proposed to achieve second-order accuracy in the sharp-interface approximation of
surface diffusion while retaining the classical Cahn--Hilliard energy \cite{bretin_2022}.

Recently, Zhou et al.\ introduced an \emph{anisotropic Cahn--Hilliard equation with improved conservation} (ACH--IC),
derived from Onsager's variational principle under a modified conservation law in which one replaces classical mass
conservation by the exact conservation of a designed monotone mapping $Q(\phi)$ \cite{zhou_2025_achic}.
The mapping $Q$ is chosen as a smooth surrogate of the sharp phase indicator: it is odd, satisfies $Q(\pm 1)=\pm 1$, and
is steeper near $\phi=0$ than $\phi$ itself. As a consequence, the conserved \enquote{$Q$-volume}
\begin{equation}\label{eq:VQ_def}
\VQ(t) \;:=\; \frac12\int_\Omega (1+Q(\phi(x,t)))\,\dOm
\end{equation}
provides a higher-fidelity diffuse approximation of the geometric volume, reducing spurious shrinkage at practical values
of $\eps$ \cite{zhou_2025_achic}. In Zhou's baseline family, $Q'$ is proportional to $(1-\phi^2)^k$, which improves the
sharp-indicator approximation as $k$ increases but can restrict the design space.

In this work we revisit and extend Zhou's improved-conservation framework in two directions motivated by analysis and by
numerical practice. First, we carry out the matched-asymptotic analysis in the \emph{unscaled (physical) time}
formulation (no $\eps$-dependent prefactor in $\partial_t\phi$), which is natural for multiphysics coupling (e.g.\
Cahn--Hilliard--Navier--Stokes) but shifts physical motion to the first correction order
\cite{magaletti_2013,eikelder_2024}. Second, we continue the inner expansion to extract the curvature-forced first profile
correction (denoted $\Phi_1$), derive simplified one-dimensional integral representations, and use these to
identify an \emph{integral-moment cancellation condition} controlling the leading geometric-volume defect.
This yields a practical inverse-design rule: by selecting kernels $Q$ within parameterized families---including
exponential and Pad\'e-type enrichments that \enquote{reach} high effective degeneracy without extreme polynomial powers---we
enforce the cancellation at moderate parameter values. Together with sufficient endpoint degeneracy of $Q'$, this leads to
\emph{formal third-order accuracy} in geometric-volume conservation with respect to interface thickness.

The remainder of the paper is organized as follows. Section~\ref{sec:model} introduces the CH--IC model class and
admissible kernels $Q$. Section~\ref{sec:asymp} summarizes the unscaled matched-asymptotic hierarchy, identifies the
first inner correction, and derives the geometric-volume error expansion with the corrected moment hierarchy. Section~\ref{sec:design} presents kernel families and an inverse-design workflow. Numerical method and
experiments are presented in Section~\ref{sec:numerics} (placeholders in this draft). Appendices collect detailed
asymptotic derivations and corrected moment formulas.

\section{Conservation-Improved Cahn--Hilliard (CH-IC) models and conserved $Q$-mappings}\label{sec:model}

We consider a conserved scalar order parameter $\phi=\phi_\varepsilon(x,t)\in[-1,1]$ on a bounded Lipschitz domain
$\Omega\subset\mathbb{R}^d$ ($d=2,3$; the analysis is also valid for $d=1$) over a time interval $t\in[0,T]$.
The phase field $\phi$ may represent, for instance, a rescaled concentration difference in a binary mixture, and we assume
$\phi(\cdot,t)\in H^1(\Omega)$ for each $t$.
We use the thresholded phase regions $\Omega_\varepsilon^\pm(t)$ and the mid-surface $\Gamma_\varepsilon(t)$ defined
in~\eqref{eq:Omegaeps_pm}. In the sharp-interface regime $\varepsilon\ll 1$, $\phi$ is close to $\pm 1$ in the bulk and
transitions across an $\mathcal{O}(\varepsilon)$-thin layer near $\Gamma_\varepsilon(t)$.

The thermodynamics are governed by the standard Ginzburg--Landau free energy for an isotropic binary mixture at constant temperature,
\begin{equation}\label{eq:energy}
E_\varepsilon[\phi]
=\int_\Omega \gamma\Bigl(\frac{1}{\varepsilon}W(\phi)+\frac{\varepsilon}{2}|\nabla\phi|^2\Bigr)\,\dOm,
\qquad
W(\phi)=\frac14(\phi^2-1)^2,
\end{equation}
where $\gamma>0$ is the (constant) surface-tension coefficient. For notational simplicity we set $\gamma=1$ by rescaling the energy.
The associated chemical potential is
\begin{equation}\label{eq:mu}
\mu=\frac{\delta E_\varepsilon}{\delta\phi}
=\frac{1}{\varepsilon}W'(\phi)-\varepsilon\Delta\phi.
\end{equation}

\medskip\noindent
\textbf{Improved conservation.}
Classical Cahn--Hilliard dynamics conserves the \enquote{mass} $\int_\Omega \phi\,\dOm$.
Following Zhou et al.\ \cite{zhou_2025_achic}, we instead impose the exact conservation of a designed monotone mapping $Q(\phi)$,
\begin{equation}\label{eq:Qcons}
\frac{d}{dt}\int_\Omega Q(\phi)\,\dOm
=\int_\Omega Q'(\phi)\,\partial_t\phi\,\dOm=0,
\end{equation}
where $Q:[-1,1]\to[-1,1]$ is odd, strictly increasing, and satisfies
\begin{equation}\label{eq:Qassum}
Q(\pm1)=\pm1,\qquad Q(-\phi)=-Q(\phi),\qquad Q'(\phi)>0\ \text{for }|\phi|<1.
\end{equation}

\medskip\noindent
\textbf{CH--IC dynamics.}
Using Onsager's variational principle with the constraint \eqref{eq:Qcons} (see \cite{zhou_2025_achic} and the discussion in
\cite{bretin_2022,salvalaglio_2021}), one arrives at the conservation-improved Cahn--Hilliard (CH--IC) evolution
\begin{align}\label{eq:chic}
\partial_t\phi
&= N(\phi)\,\nabla\cdot\!\Bigl(M(\phi)\,\nabla\bigl(N(\phi)\mu\bigr)\Bigr),
\qquad
N(\phi):=\frac{1}{Q'(\phi)}.
\end{align}
Here $\mu$ is given by \eqref{eq:mu} and $M(\phi)\ge 0$ is a (possibly degenerate) mobility. We parameterize it as
\begin{equation}\label{eq:mobility_deg}
M(\phi)=M_\ast\,(1-\phi^2)^{\ell},\qquad \ell\in\mathbb{N},
\end{equation}
where $\ell$ controls the strength of the endpoint degeneracy and $M_\ast>0$ is a constant mobility magnitude. In the
surface-diffusion regime, a common choice is the quartic degeneracy $\ell=2$, which suppresses bulk diffusion and yields
surface-diffusion motion in the sharp-interface limit \cite{lee_2016_sharp,salvalaglio_2021,bretin_2022}.

\begin{remark}[Mobility magnitude and effective time scale]\label{rem:mobility_scaling}
Throughout the analysis we treat $M_\ast=\mathcal{O}(1)$ in order to isolate the geometric structure of the sharp-interface
limit. Since the interfacial velocity law depends linearly on $M_\ast$, a scaling $M_\ast=\varepsilon^{a}\,\overline M$
with $\overline M=\mathcal{O}(1)$ simply rescales the interfacial time scale: if
\[
V_n=\varepsilon\,c_{\mathrm{SD}}\,M_\ast\,\Delta_\Gamma H+\mathcal{O}(\varepsilon^2)
\]
(where $c_{\mathrm{SD}}>0$ is the profile-dependent surface-diffusion constant; cf.\ Appendix~\ref{app:A}, and
$c_{\mathrm{SD}}=\tfrac49$ for $W(\phi)=\tfrac14(\phi^2-1)^2$ and $M(\phi)=(1-\phi^2)^2$), then
\[
V_n=\varepsilon^{a+1}c_{\mathrm{SD}}\,\overline M\,\Delta_\Gamma H+\mathcal{O}(\varepsilon^{a+2}),
\]
equivalently yielding an $\mathcal{O}(1)$ law in the slow time $\tau=\varepsilon^{a+1}t$. In hydrodynamic NSCH models one
additionally has an advective leading-order kinematics $V_n=u\cdot\nu+\cdots$, so the mobility scaling determines whether
the CH-driven relaxation enters as a higher-order correction or at leading order; see, e.g.,
\cite{magaletti_2013,jacqmin1999calculation,demont2023numerical,abels2014sharp}.
\end{remark}
\noindent Henceforth we set $M_\ast=1$ without loss of generality (this amounts to a constant rescaling of time), and we write
$M(\phi)=(1-\phi^2)^{\ell}$.

The conserved diffuse \enquote{volume} proxy is
\begin{equation}
\mathcal{V}_Q(t):=\frac12\int_\Omega \bigl(1+Q(\phi(x,t))\bigr)\,\dOm,
\end{equation}
which is exactly constant under \eqref{eq:chic} with no-flux boundary conditions. The goal is to choose $Q$ so that
$\mathcal{V}_Q(t)$ approximates the \emph{geometric} volume $|\Omega^+(t)|$ with higher asymptotic accuracy.

\medskip
\noindent
\paragraph{Zhou's polynomial family.}
The baseline family introduced by Zhou et al. \cite{zhou_2025_achic} is defined by
\begin{equation}\label{eq:Qk_def}
Q_k'(\phi)=\frac{(1-\phi^2)^k}{B_k},
\qquad
B_k:=\int_0^1 (1-s^2)^k\,ds,
\qquad
Q_k(\phi):=\int_0^{\phi} Q_k'(s)\,ds,
\end{equation}
where $k\ge 0$ controls the endpoint degeneracy of $Q_k'$ at $\phi=\pm1$ and the normalization $B_k$ ensures $Q_k(1)=1$.
The case $k=0$ recovers the classical mass mapping $Q_0(\phi)=\phi$.
A particularly important instance is $k=1$, for which
$Q_1'(\phi)=\tfrac32(1-\phi^2)$ and $N(\phi)=\tfrac{2}{3}(1-\phi^2)^{-1}$.
In this case, \eqref{eq:chic} coincides with the weighted-metric $H^{-1}$ gradient flow underlying the
second-order variational surface-diffusion model of Bretin et al.~\cite{bretin_2022}, where the metric weight appears both
in the time derivative and in the flux. This links Zhou's improved-conservation mapping to weighted-metric surface-diffusion
formulations~\cite{bretin_2022,salvalaglio_2021}.
A distinctive feature of the NMN ($k=1$) case is \emph{first-order structural stability} of the inner profile under curvature:
the curvature-induced correction at order $\varepsilon$ vanishes identically, $\Phi_1\equiv 0$
(Section~\ref{sec:asymp}).

\begin{figure}[h]
\centering
\includegraphics[width=\textwidth]{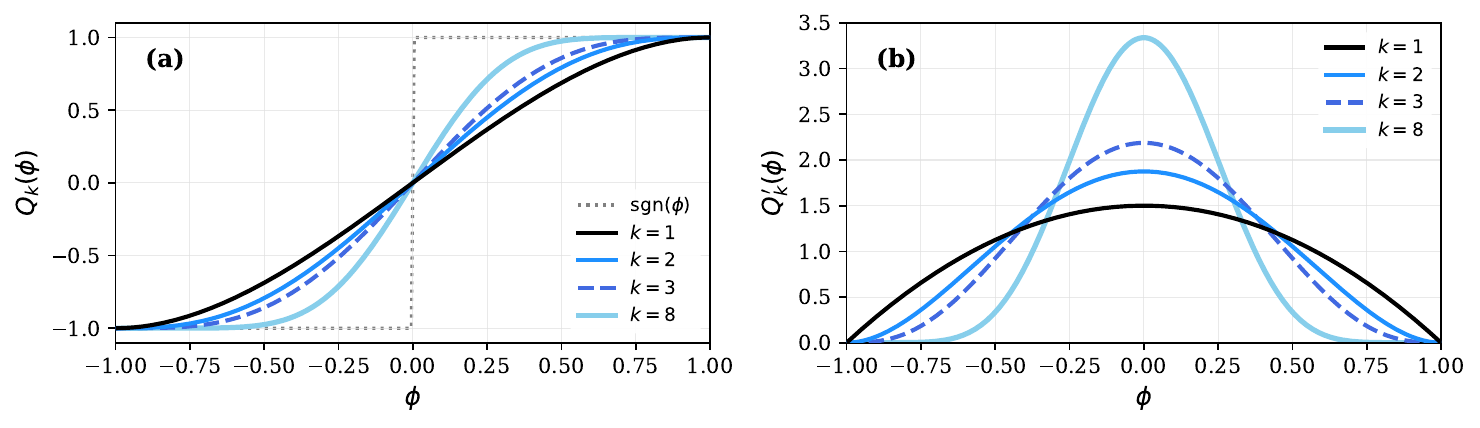}
\caption{Conservation mapping $Q_k(\phi)$ and its derivative $Q_k'(\phi)$ for Zhou's polynomial family~\eqref{eq:Qk_def}
with degeneracy orders $k = 1,2,3,8$.
(a)~The mapping $Q_k$ increasingly approximates the sharp sign function $\operatorname{sgn}(\phi)$ (dotted grey) as $k$ grows.
(b)~The derivative $Q_k'$ concentrates near $\phi = 0$ and vanishes at $\phi = \pm 1$ to order~$k$, controlling the endpoint
degeneracy. The case $k = 1$ corresponds to the NMN model of Bretin et al.~\cite{bretin_2022}.}
\label{fig:Q_profiles}
\end{figure}

\medskip\noindent
The polynomial family \eqref{eq:Qk_def} already improves the convergence of $\mathcal{V}_Q(t)$ to $|\Omega^+(t)|$ as $k$ increases.
However, the main analytical point of this paper is that the leading geometric-volume error depends on \emph{two} moments and can be
eliminated by a moment-balance condition; we summarize the mechanism and the resulting design rule next. We then exploit the freedom in $Q'$ to design kernels that enforce moment balance within low-dimensional families (Section~\ref{sec:design}).

\section{Unscaled asymptotics, geometric-volume error, and moment balancing}
\label{sec:asymp}

We summarize the unscaled matched-asymptotic structure and isolate the mechanism responsible for geometric-volume drift at finite
interface thickness. The key new ingredient relative to standard degenerate Cahn--Hilliard asymptotics is the
$\mathcal{O}(\varepsilon)$ curvature-induced inner correction triggered by the improved-conservation mapping $Q$.
Detailed derivations are given in Appendices~\ref{app:A}--\ref{app:B}.

\medskip\noindent
\textbf{Inner coordinate and leading profile.}
Let $d(x,t)$ denote the signed distance to $\Gamma(t)$ (positive in $\Omega^+(t)$), and introduce the stretched normal coordinate
$z=d(x,t)/\varepsilon$.
The leading inner profile is the heteroclinic $\sigma$ solving
\begin{equation}\label{eq:sigma}
\sigma''-W'(\sigma)=0,\qquad \sigma(\pm\infty)=\pm1,\qquad \sigma(0)=0,
\end{equation}
so that, for the quartic potential~\eqref{eq:energy},
\begin{equation}\label{eq:sigma_sol}
\sigma(z)=\tanh\!\left(\frac{z}{\sqrt{2}}\right).
\end{equation}
In the unscaled (physical) time formulation~\eqref{eq:chic}, the interface motion appears one order later than in the
classical $\varepsilon$-rescaled time setting. More precisely, the normal velocity admits the expansion
\begin{equation}\label{eq:Vlimit}
V_n \;=\; \varepsilon\,V_1 \;+\;\mathcal{O}(\varepsilon^2),
\end{equation}
and the leading coefficient $V_1$ is governed by surface diffusion,
\begin{equation}\label{eq:V1_SD}
V_1 \;=\; -\,\nabla_\Gamma\!\cdot\!\bigl(\mathcal{M}_\Gamma \nabla_\Gamma H\bigr),
\end{equation}
where the effective surface mobility $\mathcal{M}_\Gamma$ depends on the choices of $M(\phi)$ and $Q$.
In the isotropic case with the standard quartic potential and the baseline normalization used in this paper,
\eqref{eq:V1_SD} reduces to the constant-mobility law $V_1=\tfrac{4}{9}\,\Delta_\Gamma H$.
A matched-asymptotic derivation and the expression for $\mathcal{M}_\Gamma$ are summarized in Appendix~\ref{app:A};
see also \cite{lee_2016_sharp,salvalaglio_2021,bretin_2022,zhou_2025_achic}.

\medskip\noindent
\textbf{First curvature correction $\Phi_1$ and a reduced formula.}
We expand
\[
\phi(x,t)=\sigma(z)+\varepsilon\,H(x,t)\,\Phi_1(z)+\mathcal{O}(\varepsilon^2).
\]
At order $\varepsilon$, the correction $\Phi_1$ solves
\begin{equation}\label{eq:Phi1_eq}
\mathcal{L}\Phi_1
=
\frac{\sqrt{2}}{3}\,Q'(\sigma(z))-\sigma'(z),
\qquad
\mathcal{L}:=\partial_{zz}-W''(\sigma(z)),
\end{equation}
with boundedness as $z\to\pm\infty$ and gauge $\Phi_1(0)=0$.
The operator $\mathcal{L}$ is self-adjoint with $\ker\mathcal{L}=\mathrm{span}\{\sigma'\}$, and solvability follows from the Fredholm
alternative (Appendix~\ref{app:B}).

Zhou et al.\ represent the solution by a two-fold integral (variation-of-constants) formula:
\begin{equation}\label{eq:Phi1_Zhou}
\Phi_1(z)=
\sigma'(z)\int_{0}^{z}\frac{1}{\sigma'(\eta)^2}
\left(
\int_{0}^{\eta}
\Bigl(\frac{\sqrt{2}}{3}\,Q'(\sigma(\xi))-\sigma'(\xi)\Bigr)\sigma'(\xi)\,d\xi
\right)d\eta,
\end{equation}
which is correct but not ideal for analysis or kernel design.
A main simplification of this work is to reduce \eqref{eq:Phi1_Zhou} to a single quadrature in the phase variable
$u=\sigma(z)\in(-1,1)$.
Introduce the NMN reference kernel
\[
Q_1(u):=\frac{3}{2}\left(u-\frac{u^3}{3}\right),
\qquad\text{(corresponding to $k=1$, $S\equiv1$)}
\]
and define
\begin{equation}\label{eq:Fdef}
F(u):=\int_{0}^{u}\Bigl(\frac{\sqrt{2}}{3}Q'(w)-\frac{1}{\sqrt{2}}(1-w^2)\Bigr)\,dw
=\frac{\sqrt{2}}{3}\Bigl(Q(u)-Q_1(u)\Bigr).
\end{equation}
Then (see Appendix~\ref{app:B})
\begin{equation}\label{eq:Phi1_simplified}
\Phi_1(z)=\sigma'(z)\,2\sqrt{2}\int_{0}^{\sigma(z)}\frac{F(u)}{(1-u^2)^3}\,du
=\frac{4}{3}\,\sigma'(z)\int_{0}^{\sigma(z)}\frac{Q(u)-Q_1(u)}{(1-u^2)^3}\,du.
\end{equation}
This representation makes the structure transparent: $\Phi_1\equiv0$ if and only if $Q\equiv Q_1$, i.e.\ the NMN case is
\emph{structurally stable} at order~$\varepsilon$.

\medskip\noindent
\textbf{Geometric-volume defect and moment hierarchy.}
Although $\int_\Omega Q(\phi)\,\dOm$ is conserved exactly, the \emph{geometric} volume $|\Omega^+(t)|$ can drift at finite $\varepsilon$.
We therefore compare $|\Omega^+(t)|$ with the conserved diffuse $Q$-volume proxy $\VQ(t)$ defined in~\eqref{eq:VQ_def}, and introduce the
geometric-volume error
\begin{equation}\label{eq:EVdefinition}
\ErrV(t):=\VQ(t)-|\Omega^+(t)|
=\frac12\int_\Omega \bigl(1+Q(\phi(x,t))\bigr)\,\dOm-\int_\Omega \Heav(d(x,t))\,\dOm,
\end{equation}
where $\Heav$ denotes the Heaviside step function (equivalently, $\Heav(d(\cdot,t))$ is the indicator of $\Omega^+(t)$ up to the interface).
A tubular-neighborhood expansion yields the decomposition
\begin{equation}\label{eq:EVexpansion}
\ErrV(t)
=
\underbrace{\varepsilon\int_{\Gamma(t)} C_0[Q]\,dA
+\frac{\varepsilon^2}{2}\int_{\Gamma(t)} H\,\mathcal{C}_1[Q]\,dA}_{=:\ \ErrVint(t)}
\;+\;\ErrVbulk(t)
\;+\;\mathcal{O}(\varepsilon^3).
\end{equation}
For odd $Q$ satisfying \eqref{eq:Qassum}, the $\mathcal{O}(\varepsilon)$ coefficient
\begin{equation}\label{eq:C0_def}
C_0[Q]:=\int_{-\infty}^{\infty}\Bigl(\tfrac12\bigl(1+Q(\sigma(z))\bigr)-\Heav(z)\Bigr)\,dz
\end{equation}
vanishes, hence the leading interfacial contribution is $\mathcal{O}(\varepsilon^2)$.
Moreover, for kernels of the form \eqref{eq:Qprime_kS} with degeneracy order~$k$,
$\ErrVbulk(t)=\mathcal{O}(\varepsilon^{k+1})$ (Appendix~\ref{app:B}).

The $\mathcal{O}(\varepsilon^2)$ interfacial defect is governed by the combined moment
\begin{equation}\label{eq:C1def}
\mathcal{C}_1[Q]:=\mathcal{M}_1[Q]+\mathcal{J}_1[Q].
\end{equation}
The geometric moment depends only on $Q'$,
\begin{equation}\label{eq:M1}
\mathcal{M}_1[Q]=-2\int_0^1 Q'(u)\,\operatorname{arctanh}^2(u)\,du,
\end{equation}
while the dynamic moment captures the contribution from the curvature correction $\Phi_1$:
\begin{equation}\label{eq:J1}
\mathcal{J}_1[Q]=\frac{8}{3}\int_0^1 \frac{\bigl(Q(u)-Q_1(u)\bigr)\,\bigl(1-Q(u)\bigr)}{(1-u^2)^3}\,du,
\qquad
Q_1(u)=\frac{3}{2}\left(u-\frac{u^3}{3}\right).
\end{equation}
Both formulas follow from \eqref{eq:Phi1_simplified} by a change of variables and Fubini's theorem; see Appendix~\ref{app:B}.
The sign convention in \eqref{eq:Phi1_eq}--\eqref{eq:J1} is consistent with Zhou \cite{zhou_2025_achic} and yields $\mathcal{J}_1>0$ in the
moment-balanced regime.

\begin{lemma}\label{lem:M1negative}
For any strictly increasing, odd kernel $Q$ satisfying \eqref{eq:Qassum}, one has $\mathcal{M}_1[Q]<0$.
\end{lemma}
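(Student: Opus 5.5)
The claim follows from the explicit moment formula \eqref{eq:M1}, $\mathcal{M}_1[Q]=-2\int_0^1 Q'(u)\operatorname{arctanh}^2(u)\,du$. The plan is to show that the integral $I[Q]:=\int_0^1 Q'(u)\operatorname{arctanh}^2(u)\,du$ is strictly positive (possibly $+\infty$, in which case $\mathcal{M}_1=-\infty<0$ trivially). We do not need any information about $\Phi_1$ or the dynamic moment $\mathcal{J}_1$.

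\textbf{Pointwise sign of the integrand.} The integrand is a product of two nonnegative factors on $(0,1)$. By \eqref{eq:Qassum}, $Q'(u)>0$ for $|u|<1$. Also $\operatorname{arctanh}(u)>0$ for $u\in(0,1)$, so $\operatorname{arctanh}^2(u)>0$ there. Hence the integrand is strictly positive on the open interval $(0,1)$ and vanishes at most at the endpoint $u=0$. Since $Q\in C^1$ (implicitly, as $Q'$ appears throughout), the integrand is continuous on $(0,1)$.

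\textbf{Strict positivity.} Fix any compact subinterval, say $[\tfrac14,\tfrac12]$. By continuity and positivity, both factors attain positive minima there, so
\[
I[Q]\;\ge\;\tfrac14\,\min_{[1/4,1/2]}Q'\cdot\operatorname{arctanh}^2(\tfrac14)\;>\;0 .
\]
If $Q'$ is only assumed integrable with $Q'>0$, we would instead argue that a nonnegative measurable function which is strictly positive on a set of positive measure has positive integral.

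\textbf{Well-definedness at $u\to1$.} This is the only step needing care. Near $u=1$, $\operatorname{arctanh}^2(u)$ blows up like $\tfrac14\log^2(1-u)$. This singularity is integrable against any bounded $Q'$, so $I[Q]$ is finite whenever $Q'$ is bounded near $1$, which holds for all kernels considered (e.g.\ \eqref{eq:Qk_def}, where $Q'$ even vanishes at $\pm1$). In the general case the integral is of a nonnegative function, hence well defined in $(0,\infty]$. Either way $\mathcal{M}_1[Q]=-2I[Q]<0$.

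Oddness of $Q$ and the normalization $Q(\pm1)=\pm1$ are not used beyond justifying the half-line form of \eqref{eq:M1}.
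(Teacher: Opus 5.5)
Your proof is correct and takes essentially the same approach as the paper. Both read off the sign from the single-integral representation $\mathcal{M}_1[Q]=-2\int_0^1 Q'(u)\operatorname{arctanh}^2(u)\,du$ (Theorem~\ref{thm:B_M1_arctanh2}), whose integrand is strictly positive on $(0,1)$. Your lower bound on a compact subinterval and your remark on the integrable $\operatorname{arctanh}^2$ singularity at $u=1$ only make explicit what the paper's one-line proof leaves implicit.
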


The negativity of $\mathcal{M}_1$ reflects the fact that $Q(\sigma(z))$ always \enquote{sags} below the ideal step function across the diffuse layer.
Hence cancellation of the $\mathcal{O}(\varepsilon^2)$ volume error requires a positive contribution from $\mathcal{J}_1$.

\medskip\noindent
\textbf{Design rule (moment balance).}
Third-order geometric-volume accuracy is obtained provided:
\begin{equation}\label{eq:design_rule}
\begin{aligned}
\text{(i)}\;& Q'(\phi)\ \text{vanishes at least to second order at}\ \phi=\pm1,\\
\text{(ii)}\;& \mathcal{C}_1[Q]=\mathcal{M}_1[Q]+\mathcal{J}_1[Q]=0.
\end{aligned}
\end{equation}
Under (i)--(ii), $\ErrVbulk(t)=\mathcal{O}(\varepsilon^3)$ and the leading interfacial contribution cancels, so that
$\ErrV(t)=\mathcal{O}(\varepsilon^3)$.

\section{Kernel families and inverse design}\label{sec:design}

The moment formulas in Section~\ref{sec:asymp} enable an inverse-design viewpoint: within a low-dimensional kernel family, we tune
parameters so that the combined moment $\mathcal{C}_1[Q]=\mathcal{M}_1[Q]+\mathcal{J}_1[Q]$ vanishes, while enforcing sufficient endpoint
vanishing of $Q'$ to retain third-order bulk behavior.

We parameterize kernels through a nonnegative, even shaping function $S(\phi;\theta)$ and a baseline exponent $k\ge1$:
\begin{equation}\label{eq:Qprime_kS}
Q'(\phi)=\frac{(1-\phi^2)^k\,S(\phi;\theta)}{B_{k,S}(\theta)},
\qquad
B_{k,S}(\theta)=\int_0^1 (1-s^2)^k\,S(s;\theta)\,ds,
\qquad
Q(\phi)=\int_{0}^{\phi}Q'(s)\,ds.
\end{equation}
This construction guarantees $Q(\pm1)=\pm1$, $Q$ odd, and $Q'>0$ on $(-1,1)$.
The factor $(1-\phi^2)^k$ enforces baseline vanishing of order~$k$ at $\phi=\pm1$.
If, in addition, $S(\pm1;\theta)=0$ with $S(\phi)\sim c(1-\phi^2)$ near the endpoints, then $(1-\phi^2)^k S(\phi)$ vanishes like
$(1-\phi^2)^{k+1}$; that is, shaping contributes one extra endpoint power. This is useful for $k=1$: endpoint-vanishing shaping yields effective
bulk degeneracy two while preserving the analytical advantages of the NMN baseline.

For any fixed $(k,S)$, moment balance becomes a one-dimensional root problem,
\begin{equation}\label{eq:C1_root}
\mathcal{C}_1(\theta):=\mathcal{M}_1[Q_\theta]+\mathcal{J}_1[Q_\theta]=0,
\end{equation}
with $\mathcal{M}_1$ and $\mathcal{J}_1$ evaluated by quadratures using \eqref{eq:M1}--\eqref{eq:J1}.
Figure~\ref{fig:kernel_families}(d) illustrates that $\mathcal{C}_1(\theta)$ varies smoothly and crosses zero robustly for the shaped families, so
$\theta^\star$ can be found by standard bracketing.

\subsection{Kernel families}\label{ssec:families}

\medskip\noindent
\textbf{Zhou's polynomial family ($S\equiv1$).}
Setting $S\equiv1$ recovers Zhou's baseline kernels $Q'_k(\phi)\propto(1-\phi^2)^k$.
The combined moment $\mathcal{C}_1$ decreases with~$k$ and crosses zero near $k=8$.
The case $k=1$ (NMN) is special: $\Phi_1\equiv0$ identically, but $\mathcal{C}_1\approx-0.645$.

\medskip\noindent
\textbf{Exponential shaping.}
For $k\ge2$ we use $S(\phi)=\exp(\beta_2\phi^2)$ with $\beta_2<0$.
For endpoint-vanishing designs with $k=1$, we use
\begin{equation}\label{eq:S_exp_EV}
S(\phi)=\exp(\beta_2\phi^2)-\exp(\beta_2),\qquad \beta_2<0,
\end{equation}
which is even, nonnegative on $[-1,1]$, and satisfies $S(\pm1)=0$.

\medskip\noindent
\textbf{Simple rational (endpoint-vanishing).}
\begin{equation}\label{eq:S_simple_EV}
S(\phi)=\frac{1}{1+q\phi^2}-\frac{1}{1+q},
\qquad q>0.
\end{equation}

\medskip\noindent
\textbf{Pad\'e-type (endpoint-vanishing).}
\begin{equation}\label{eq:S_pade_EV}
S(\phi)=\frac{1+p\,\phi^4}{1+q\,\phi^2}-\frac{1+p}{1+q},
\qquad q>0.
\end{equation}

\smallskip
Table~\ref{tab:kernel_summary} collects the tuned parameters and moment values.
Figure~\ref{fig:kernel_families} visualizes the mappings, kernels, and inner corrections.

\begin{figure}[h]
\centering
\includegraphics[width=\textwidth]{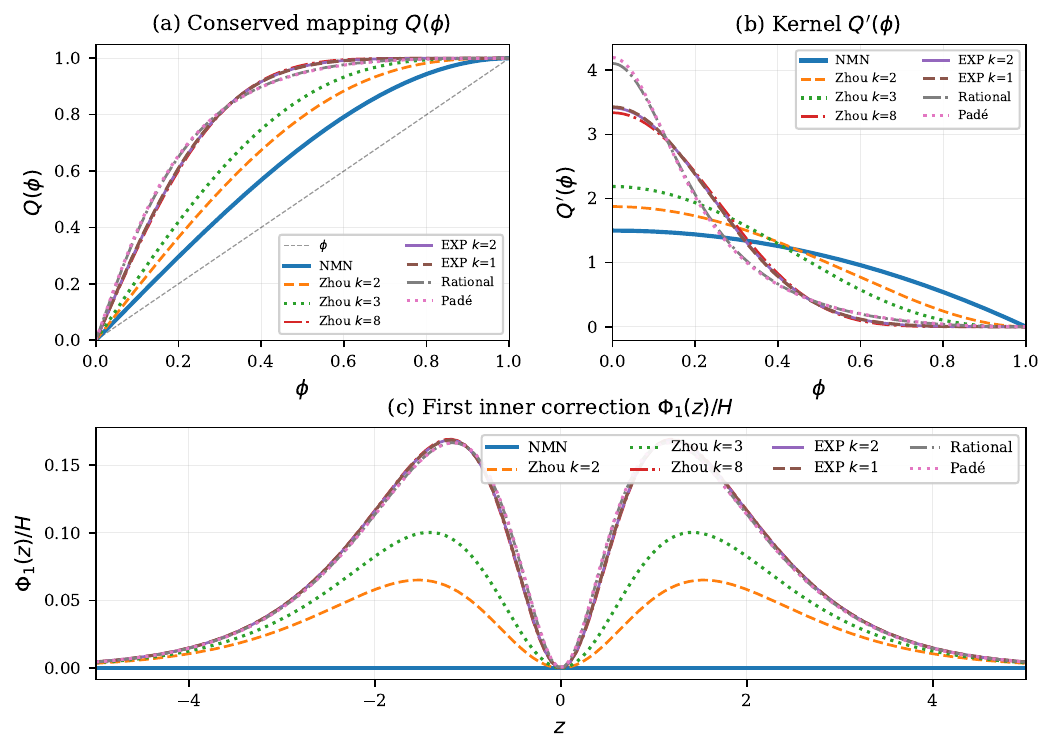}
\caption{Comparison of kernel families.
(a)~Conserved mapping $Q(\phi)$.
(b)~Kernel $Q'(\phi)$: moment-balanced designs concentrate near $\phi=0$.
(c)~First inner correction $\Phi_1(z)/H$: NMN (solid) has $\Phi_1\equiv 0$; others develop curvature-induced distortion.
(d)~Combined moment $\mathcal{C}_1$: five balanced designs achieve $\mathcal{C}_1\approx0$.}
\label{fig:kernel_families}
\end{figure}

\begin{table}[h!]
\centering
\caption{Summary of kernel families. NMN uniquely has $\Phi_1\equiv 0$.
Endpoint-vanishing $k{=}1$ families have effective degeneracy~2.}
\label{tab:kernel_summary}
\smallskip
\small
\begin{tabular}{@{}r l c l l r r r c@{}}
\toprule
 & Model & $k$ & Shaping $S(\phi)$ & Parameters & $\mathcal{M}_1$ & $\mathcal{J}_1$ & $\mathcal{C}_1$ & $\bigl\|\Phi_1/H\bigr\|_\infty$ \\
\midrule
1 & NMN          & 1 & $1$  & --- & $-0.645$ & $0$       & $-0.645$    & $0$     \\
2 & Zhou $k{=}2$ & 2 & $1$  & --- & $-0.395$ & $0.090$  & $-0.305$    & $0.065$ \\
3 & Zhou $k{=}3$ & 3 & $1$  & --- & $-0.284$ & $0.113$  & $-0.171$    & $0.100$ \\
4 & Zhou $k{=}8$ & 8 & $1$  & --- & $-0.118$ & $0.118$  & $\approx 0$ & $0.169$ \\
\addlinespace
5 & EXP $k{=}2$  & 2 & $e^{\beta_2\phi^2}$                         & $\beta_2{=}{-}6.95$             & $-0.121$ & $0.121$ & $\approx 0$ & $0.169$ \\
6 & EXP $k{=}1$  & 1 & $e^{\beta_2\phi^2}{-}e^{\beta_2}$           & $\beta_2{=}{-}8.12$             & $-0.121$ & $0.121$ & $\approx 0$ & $0.169$ \\
7 & Rational     & 1 & $\frac{1}{1+q\phi^2}{-}\frac{1}{1+q}$       & $q{=}20.9$                       & $-0.139$ & $0.139$ & $\approx 0$ & $0.167$ \\
8 & Pad\'e       & 1 & $\frac{1+p\phi^4}{1+q\phi^2}{-}\frac{1+p}{1+q}$ & $p{=}{-}0.30,\, q{=}23.4$     & $-0.140$ & $0.140$ & $\approx 0$ & $0.167$ \\
\bottomrule
\end{tabular}
\end{table}

\medskip\noindent
\emph{Practical note for numerics.}
All normalizations and moments required for tuning are one-dimensional integrals and are evaluated once per kernel.
In the solver, unshaped cases are used in closed form, while shaped kernels are tabulated on $[-1,1]$ and interpolated at runtime
(Section~\ref{sec:numerics}).

\section{Numerical methods}\label{sec:numerics}

We discretize the CH--IC system by a cell-centered finite-volume method on general polyhedral meshes
in a foam-extend / OpenFOAM-style field-operator framework; cf.\ \cite{weller_1998}.
Volume integrals are approximated by cell averages, and all diffusion operators are written in conservative
(face-flux) form using Gauss' theorem with standard non-orthogonal corrections.
Throughout, we impose no-flux boundary conditions for all diffusive fluxes, i.e.,
$\mathbf{n}\!\cdot\!\nabla\phi=0$ and $\mathbf{n}\!\cdot\!\nabla\psi=0$ on $\partial\Omega$.

\paragraph{Mixed CH--IC form and rescaled chemical potential.}
We solve the conservation-improved Cahn--Hilliard dynamics in the mixed form
\begin{subequations}\label{eq:chic_mixed}
\begin{align}
\partial_t Q(\phi) &= \nabla\cdot\!\Big(M(\phi)\,\nabla \psi\Big),
\label{eq:chic_mixed_Q}
\\
Q'(\phi)\,\psi &= \mu
= \frac{1}{\varepsilon}W'(\phi)\;-\;\varepsilon\,\Delta\phi\;+\;R(\phi,\nabla\phi).
\label{eq:chic_mixed_psi}
\end{align}
\end{subequations}
Here $Q$ is the designed monotone phase-indicator surrogate, and
$\psi := \mu/Q'(\phi)$ is the \emph{rescaled} chemical potential.
The mixed form is advantageous in finite volumes: it avoids repeatedly differentiating $Q$ inside fluxes and
leads naturally to a robust block-coupled solve for $(\phi,\psi)$.
In this work we focus on the $Q$-conservative CH--IC family and therefore set $R(\phi,\nabla\phi)\equiv 0$ in what follows.
(The implementation also supports additional variational terms of the V-model type from \cite{musil_furst_2025_enhanced};
they are omitted here.)

\paragraph{Exact discrete conservation update.}
Let $\delta t=t^{n+1}-t^n$. The defining feature of the scheme is that the conserved mapping is advanced by
its \emph{true discrete increment}
\begin{equation}\label{eq:Q_true_increment}
\frac{Q(\phi^{n+1})-Q(\phi^{n})}{\delta t},
\end{equation}
rather than by the surrogate $Q'(\phi)\,\partial_t\phi$.
Because the spatial discretization is conservative, the global discrete invariant
\[
\sum_{i=1}^{N} Q(\phi_i^{n+1})\,|\Omega_i|
=
\sum_{i=1}^{N} Q(\phi_i^{n})\,|\Omega_i|
\]
is preserved up to linear-solver tolerance once the nonlinear loop converges,
where $|\Omega_i|$ denotes the volume of the $i$-th finite-volume cell.

\paragraph{Practical evaluation.}
For the Zhou polynomial family $Q_k$, $Q(\phi)$ admits an exact polynomial representation
$Q(\phi)=\bar Q(\phi)\,\phi$ where $\bar Q$ is an \emph{even} polynomial of degree $2k$.
For instance, $\bar Q_1(\phi)=(3-\phi^2)/2$ for NMN and
$\bar Q_2(\phi)= \tfrac{15}{8} - \tfrac{5}{4}\phi^2 + \tfrac{3}{8}\phi^4$ for Zhou $k=2$.
These are evaluated using exact polynomial arithmetic (no numerical integration at runtime).
For shaped kernels, $Q(\phi)$ is precomputed on a uniform grid of $N_{\mathrm{tab}}=256$ nodes over $[0,1]$,
exploiting odd symmetry to reconstruct $Q$ on $[-1,1]$. Each entry is obtained by 16-point Gauss--Legendre quadrature
($\sim\!10^{-14}$ relative precision); runtime uses cubic Hermite interpolation
(pointwise errors $<10^{-10}$).
We then evaluate $\bar Q(\phi)=Q(\phi)/\phi$ for $|\phi|>\phi_{\mathrm{tol}}$, and use the l'Hospital limit
$\bar Q(0)=Q'(0)$ for $|\phi|\le \phi_{\mathrm{tol}}$ (we take $\phi_{\mathrm{tol}}\sim10^{-10}$).
This avoids any stabilization offset in \eqref{eq:chic_mixed_Q} even for strongly degenerate kernels.
Regularization (if needed) is applied only in \eqref{eq:chic_mixed_psi} through a localized floor on $Q'$ near the pure phases; see below.

\paragraph{Nonlinear solve, Eyre-type stabilization, and localized regularization.}
Within each time step we use a Picard (fixed-point) iteration $k=0,1,2,\dots$ (here $k$ is an iteration index, distinct from the kernel index in $Q_k$).
Coefficient fields such as $M(\phi)$, $\bar Q(\phi)$, and $Q'(\phi)$ are evaluated at the current iterate $\phi^k$.
To obtain robustness for stiff interfacial dynamics, we employ an Eyre-type stabilization \cite{eyre_1998} by writing
\[
W(\phi)=\underbrace{\Big(W(\phi)+\frac{\beta}{2}\phi^2\Big)}_{=:W_c(\phi)}
\;-\;
\underbrace{\Big(\frac{\beta}{2}\phi^2\Big)}_{=:W_e(\phi)},
\]
so that both $W_c$ and $W_e$ are convex on $\mathbb{R}$ (for the quartic potential, $\beta\ge 1$ ensures
$W_c''(\phi)=W''(\phi)+\beta\ge 0$ for all $\phi$).
Accordingly,
\[
W'(\phi)=W_c'(\phi)-W_e'(\phi),\qquad
W_c'(\phi)=W'(\phi)+\beta\phi,\qquad W_e'(\phi)=\beta\phi .
\]
To obtain a robust semi-implicit update within the Picard loop, we use the stabilized linearization
\begin{equation}\label{eq:Wprime_beta_big}
W'(\phi^{n+1})
=
\underbrace{\bigl(W''(\phi^k)+\beta\bigr)\,\phi^{n+1}}_{\text{implicit (stabilized) part}}
\;-\;
\underbrace{\Big(\bigl(W''(\phi^k)+\beta\bigr)\phi^k - W'(\phi^k)\Big)}_{\text{explicit remainder (previous iterate)}},
\end{equation}
obtained by adding and subtracting $(W''(\phi^k)+\beta)\phi^k$.
The coefficient $W''(\phi^k)+\beta$ is nonnegative by construction, ensuring the implicit contribution is coercive.
In all computations reported here we fix $\beta=1.02$.

In the rescaled-potential equation \eqref{eq:chic_mixed_psi}, $Q'(\phi)$ degenerates at $\phi=\pm1$ for the
designed kernels. Numerically, this is handled by a \emph{localized floor} that activates only where $Q'$
falls below a prescribed threshold:
\[
Q'(\phi)\ \mapsto\ Q'_\alpha(\phi):=\max\!\big(Q'(\phi),\alpha_{Q'}\big),
\]
with $\alpha_{Q'}\ll1$. This improves conditioning while leaving the interfacial region effectively unchanged.
(We test sensitivity to $\alpha_{Q'}$ in the results section.)

\paragraph{Block-coupled linear system in explicit operator notation.}
At each Picard iterate $k$, we solve a coupled linear system for $(\phi^{n+1},\psi^{n+1})$.
Define the diffusion operator acting on $\psi$,
\[
\mathcal{D}_k(\,\cdot\,):= -\,\nabla\cdot\!\Big(M(\phi^k)\,\nabla(\,\cdot\,)\Big),
\]
and the linearized \enquote{Allen--Cahn} operator acting on $\phi$,
\[
\mathcal{K}_k(\,\cdot\,):= -\varepsilon\,\Delta(\,\cdot\,)
+ \frac{1}{\varepsilon}\,\bigl(W''(\phi^k)+\beta\bigr)\,(\,\cdot\,).
\]
Using an implicit Euler step for the \emph{exact} $Q$-increment and freezing nonlinear coefficients at $\phi^k$,
the Picard step can be written as the block system
\begin{equation}\label{eq:block_system_operator}
\begin{bmatrix}
\frac{\bar Q(\phi^k)}{\delta t}\,I & \mathcal{D}_k\\[3pt]
\mathcal{K}_k & -\,Q'_\alpha(\phi^k)\,I
\end{bmatrix}
\begin{bmatrix}
\phi^{n+1}\\[2pt]
\psi^{n+1}
\end{bmatrix}
=
\begin{bmatrix}
\frac{1}{\delta t}\,Q(\phi^{n})\\[4pt]
b_\psi(\phi^k)
\end{bmatrix},
\end{equation}
where $\bar Q(\phi):=Q(\phi)/\phi$ with $\bar Q(0)=Q'(0)$, and the right-hand side of the potential equation is
\begin{equation}\label{eq:bpsi_def}
b_\psi(\phi^k)
=
\frac{1}{\varepsilon}\Big(
W'(\phi^k) - \bigl(W''(\phi^k)+\beta\bigr)\phi^k
\Big).
\end{equation}

\begin{remark}[Advection and coupled transport]
If advection is present, it is incorporated in \eqref{eq:chic_mixed_Q} in standard finite-volume form as a conservative
face-flux divergence, optionally split into implicit/explicit parts.
We refer to \cite{musil_furst_2025_enhanced} for the convection--diffusion coupling and related implementation details.
\end{remark}

\begin{algorithm}[h]
\caption{Block-coupled CH--IC solver (per time step)}
\begin{algorithmic}[1]\label{alg:coupled_chic}
\STATE \textbf{Given:} $\phi^n$, $\psi^n$ (or $\mu^n$), $\delta t$, tolerance $\mathrm{tol}$.
\FOR{each time step $t^n\to t^{n+1}$}
  \STATE Set $k=0$, $\phi^k\gets \phi^n$, $\psi^k\gets \psi^n$.
  \REPEAT
    \STATE Update coefficients: $M(\phi^k)$, $Q(\phi^k)$, $\bar Q(\phi^k)$, $Q'_\alpha(\phi^k)$.
    \STATE Assemble the block system \eqref{eq:block_system_operator}--\eqref{eq:bpsi_def}.
    \STATE Solve for $(\phi^{n+1},\psi^{n+1})$ using GMRES + ILUC0 on the block matrix.
    \STATE (Optional) enforce bounds: $\phi^{n+1}\gets \max(\min(\phi^{n+1},1),-1)$.
    \STATE Compute residual $r=\|\phi^{n+1}-\phi^k\|_{L^1(\Omega)}/|\Omega|$.
    \STATE Update $(\phi^k,\psi^k)\gets (\phi^{n+1},\psi^{n+1})$, $k\gets k+1$.
  \UNTIL{$r<\mathrm{tol}$}
  \STATE Accept $(\phi^{n+1},\psi^{n+1})$.
  \IF{AMR enabled}
    \STATE Refine around the interface and conservatively map $(\phi^{n+1},\psi^{n+1})$.
  \ENDIF
\ENDFOR
\end{algorithmic}
\end{algorithm}

\paragraph{Linear solver and preconditioning.}
We solve \eqref{eq:block_system_operator} by restarted GMRES \cite{saad_schultz_1986}
applied to the assembled \emph{block} system, with an ILUC0 preconditioner \cite{meijerink_vdv_1977,saad_2003}.
Across all experiments reported later, the block solve typically converges in 2--3 GMRES iterations per Picard step,
indicating near-optimal behavior for the considered meshes and parameter regimes.

\paragraph{Coupled versus segregated solution strategy.}
Solving the mixed CH system in a fully coupled fashion is significantly more robust than a segregated
(outer $\phi$ / inner $\psi$) approach, especially for stiff interfacial regimes.
This is consistent with observations in energy-stable CH time stepping \cite{eyre_1998}
and with our earlier comparative study for second-order CH formulations \cite{musil_furst_2025_enhanced}.
For partially explicit or loosely split treatments of the fourth-order operator, severe stability restrictions on
$\delta t$ are commonly observed; see, e.g., the discussion in \cite{pavlidis_2020_mg}.

\paragraph{Adaptive mesh refinement (optional).}
When AMR is enabled, we periodically refine a narrow band around the interface (e.g.\ $|\phi|<\phi_{\mathrm{ref}}$)
and conservatively map $\phi$ and $\psi$ onto the new mesh before continuing to the next time step.

\section{Numerical Experiments}\label{sec:experiments}

All experiments employ the quartic double-well potential $W(\phi)=\tfrac{1}{4}(1-\phi^2)^2$
and a quartically degenerate mobility $M(\phi)=(1-\phi^2)^2$ (i.e., $l=2$).
For the ZHOU models ($k=2,8$), we increase the mobility degeneracy to $M(\phi)=(1-\phi^2)^3$
to better satisfy the compatibility condition $1 \le k < l \le 2k+1$ from~\cite{zhou_2025_achic}; note that for $k=8$ this still falls short of the strict requirement $l \ge 8$, but we find empirically that $l=3$ already provides significant improvement. A small regularization offset $\alpha_Q \approx 10^{-6}$ is applied to $Q'(\phi)$ to prevent numerical singularities at $\phi = \pm 1$ while preserving conservation properties.

\subsection{Experiment A: Multi-Scale Droplet Coarsening and Coexistence}\label{sec:exp:droplets}

This study investigates a multi-scale population dynamics problem involving repeated high-curvature events, droplet--droplet interaction, and size-dependent shrinkage. 
The setup tends to amplify systematic conservation defects---most visibly through biased (premature) extinction of the smallest droplet---and is therefore a sensitive discriminator between the second-order NMN formulation and the proposed third-order designs.

\paragraph{Problem setup.}
We consider a two-dimensional domain $\Omega = [0,4]\times[0,1]$ discretized by a uniform Cartesian mesh of $400\times 100$ cells, yielding $\Delta x=\Delta y=10^{-2}$.
The initial condition consists of four circular droplets aligned along the midline $y=0.5$ with progressively decreasing radii $R \in \{0.15, 0.10, 0.06, 0.03\}$ (Fig.~\ref{fig:4droplets_ic}).
The phase field is initialized from a signed-distance function using a clipped linear profile,
$\phi(\mathbf{x},0)=\mathrm{clip}(d(\mathbf{x})/\varepsilon,-1,1)$, to impose a controlled diffuse layer of thickness $\mathcal{O}(\varepsilon)$.
We report results for interface thicknesses $\varepsilon \in \{2\Delta x, 4\Delta x\}$.
Time integration uses adaptive time stepping based on the nonlinear iteration count, with $\Delta t \in [10^{-10}, 5\cdot 10^{-3}]\,\mathrm{s}$ and a target of $\approx 20$ Picard iterations per step. We employed a tight nonlinear convergence tolerance of $tol=10^{-9}$ to effectively isolate the $\mathcal{O}(\varepsilon^3)$ geometric-volume error from solver residuals.

\begin{figure}[h!]
  \centering
  \includegraphics[width=0.95\linewidth]{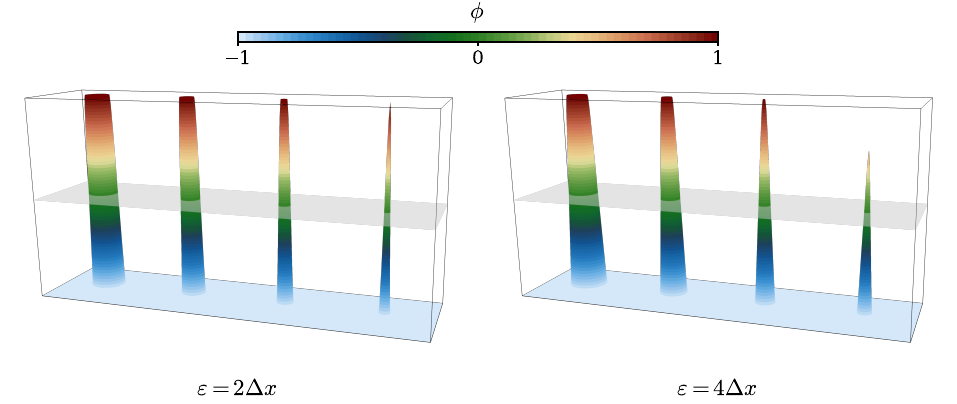}
  \caption{Initial condition for the multi-scale coarsening study: four droplets with radii $R \in \{0.15, 0.10, 0.06, 0.03\}$ on a $4\times 1$ domain.}
  \label{fig:4droplets_ic}
\end{figure}

\vspace{-1em}
\paragraph{Results and Discussion.}
The system evolves via Ostwald ripening, driven by the chemical potential difference between droplets of varying curvature. While the physics dictates that the smallest droplet ($D_4$) should shrink, numerical errors often accelerate this process, distinguishing robust schemes from dissipative ones.
The significance of the proposed $\mathcal{O}(\varepsilon^3)$ accuracy becomes most apparent when the droplet radius $R$ approaches the interface thickness $\varepsilon$. For a small droplet in 2D, the total phase volume scales as $V \sim R^2$. If $R \sim \mathcal{O}(\varepsilon)$, the physical volume is of order $\mathcal{O}(\varepsilon^2)$. Consequently, any numerical volume defect of order $\mathcal{O}(\varepsilon^2)$---typical of standard schemes or the unbalanced NMN model---is comparable in magnitude to the droplet's entire mass. This parasitic loss acts as an artificial sink, causing premature extinction. In contrast, the moment-balanced designs (EXP, PAD\'E) reduce the defect to $\mathcal{O}(\varepsilon^3)$, which remains negligible relative to the physical volume of even marginal droplets.

Figure~\ref{fig:4droplets_volume} confirms this scaling. The baseline M-model (red) exhibits steep volume loss driven by zeroth-order bulk leakage, while NMN (blue) reduces magnitude but retains a persistent drift due to non-zero $\mathcal{C}_1$. In contrast, the third-order EXP and PAD\'E variants (orange/green) yield essentially flat error profiles, minimizing volume jumps during topological changes. Monotonic energy decay (Fig.~\ref{fig:4droplets_energy_decay}) confirms this accuracy preserves the gradient-flow structure; the \enquote{delayed} extinction is a physical restoration of the correct timescale, not a stabilization artifact.

Warp visualizations (Figs.~\ref{fig:4droplets_snap_2dx}--\ref{fig:4droplets_snap_4dx}) reveal the mechanism: while the M-model suffers from bulk \enquote{sagging} ($\phi < 1$), EXP and PAD\'E maintain uniform saturation $\phi \approx \pm 1$. This high-order endpoint degeneracy ($k\ge 2$) confines errors to the interface, where the moment-cancellation mechanism ($\mathcal{C}_1 \approx 0$) eliminates the drift.

\begin{figure}[h!]
  \centering
  \includegraphics[width=\linewidth]{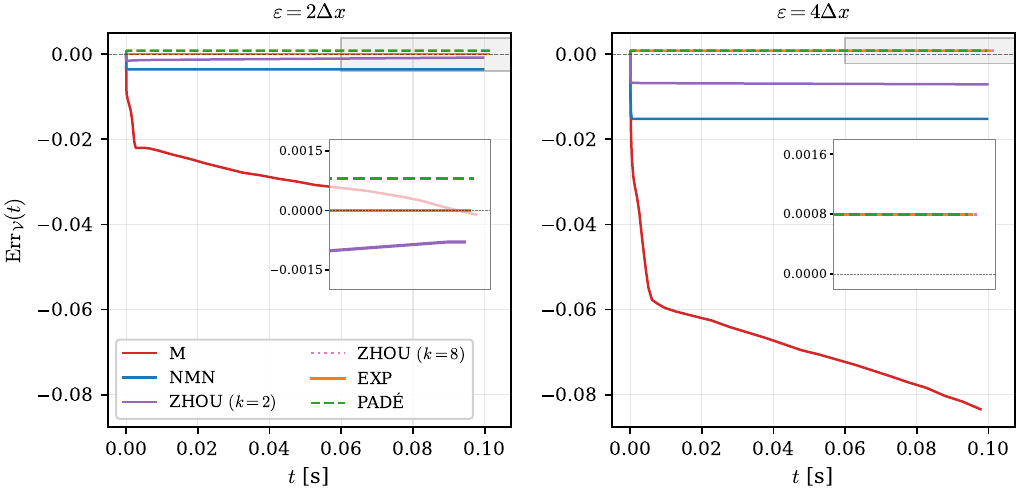}
  \caption{Volume conservation error $\ErrV(t)$ during coarsening.
  }
  \label{fig:4droplets_volume}
\end{figure}

\begin{figure}[h!]
  \centering
  \includegraphics[width=0.9\linewidth]{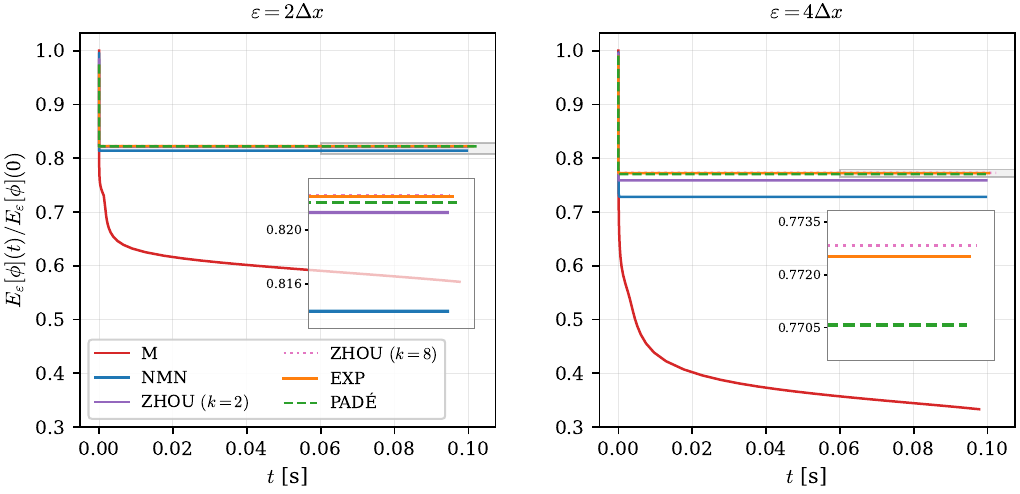}
  \caption{Free-energy decay for the compared conservation mechanisms. 
  }
  \label{fig:4droplets_energy_decay}
\end{figure}

\begin{figure}[p]
  \centering
  \includegraphics[width=\linewidth]{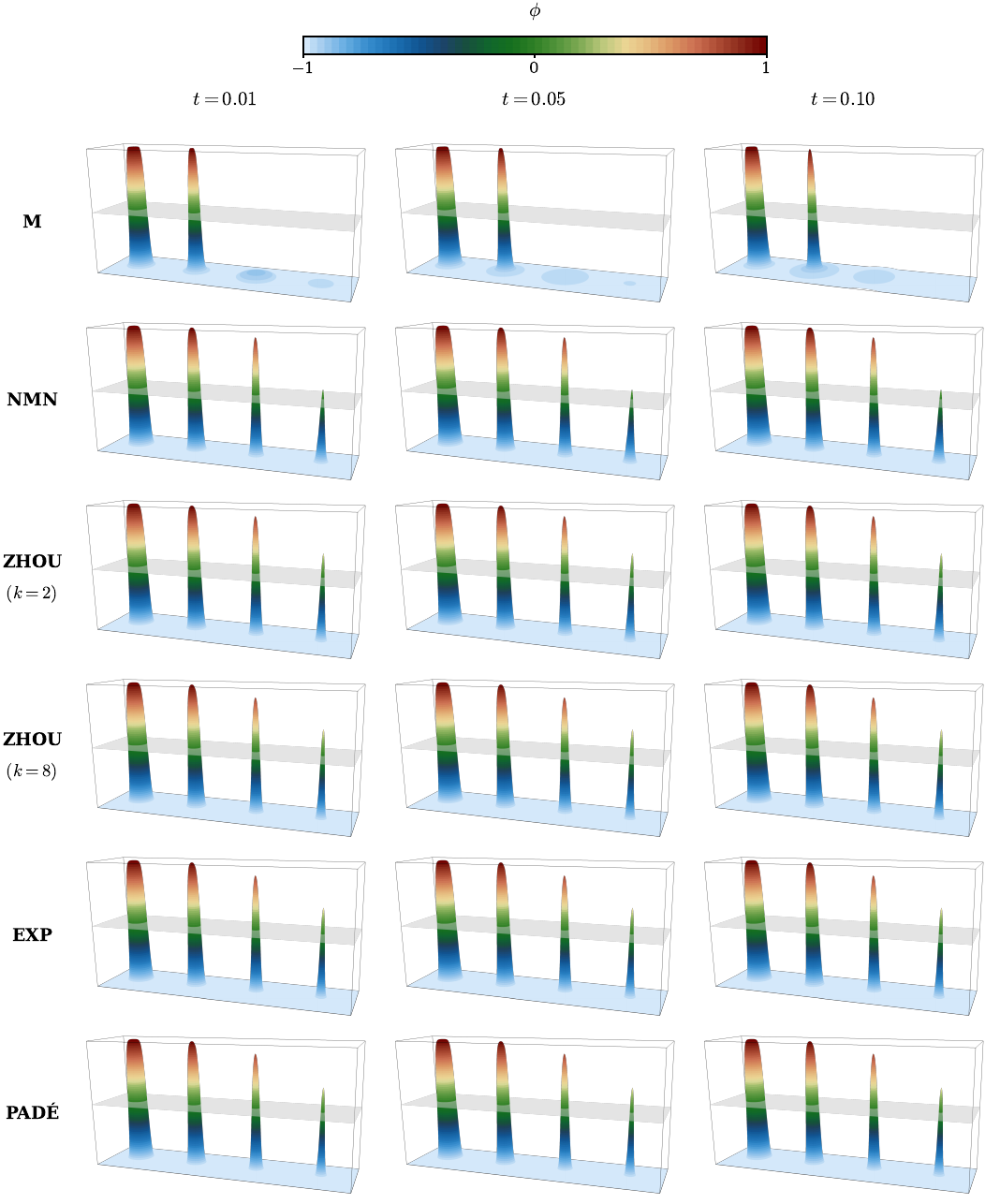}
  \caption{Evolution of the four-droplet system for $\varepsilon=2\Delta x$. The phase field $\phi$ is visualized using a scalar warp, with the $z=0$ plane indicating the interface. The M-model shows bulk sagging (rounded plateaus), whereas EXP and Pad\'e variants maintain flatter, saturated bulk phases ($\phi \approx \pm 1$) and a cleaner interfacial transition.}
  \label{fig:4droplets_snap_2dx}
\end{figure}

\begin{figure}[p]
  \centering
  \includegraphics[width=\linewidth]{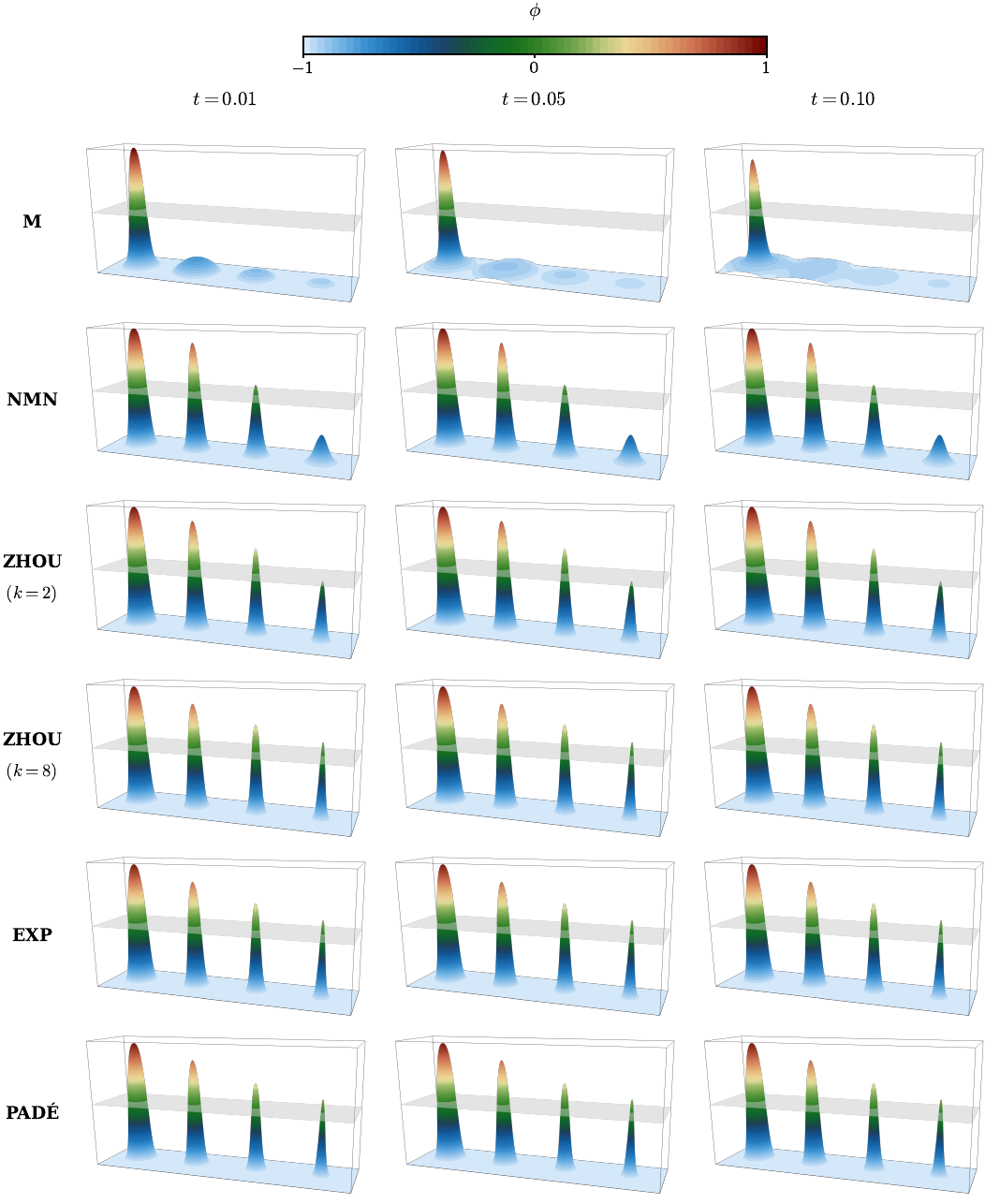}
  \caption{Evolution of the four-droplet system for $\varepsilon=4\Delta x$. Visualization uses scalar warping relative to the zero-level plane. Even with a thicker interface, the third-order models maintain better bulk integrity than the baseline M-model.}
  \label{fig:4droplets_snap_4dx}
\end{figure}

\clearpage
\subsection{Experiment B: Relaxation of High-Curvature \enquote{Flower} Morphology}\label{sec:exp:flower}

This experiment examines the relaxation of a non-convex, high-curvature \enquote{flower} geometry. The shape induces rich transient dynamics: high positive curvature at the tips drives rapid retraction, while negative curvature in the valleys slows the smoothing process. These localized fluxes provide a stringent test of bulk saturation and volume accuracy.

\paragraph{Problem setup.}
We solve the evolution on the unit square $\Omega=[0,1]^2$ with a $200\times200$ mesh ($\Delta x=5\cdot10^{-3}$).
The initial condition is a six-petaled flower shape (Fig.~\ref{fig:flower_ic}) initialized from a clipped signed-distance function.
We perform a sensitivity study over interface widths $\varepsilon \in \{2,3,4\}\Delta x$.
All variants use the same quartic double-well potential $W(\phi)=\tfrac14(\phi^2-1)^2$, quartically degenerate mobility $M(\phi)=(1-\phi^2)^2$, and the same semi-implicit stabilization parameter (as in Section~\ref{sec:numerics}); the only variable is the conservation kernel $Q(\phi)$ (M, NMN, EXP, Pad\'e).

\begin{figure}[H]
  \centering
  \begin{minipage}[c]{0.48\linewidth}
    \centering
    \includegraphics[width=\linewidth]{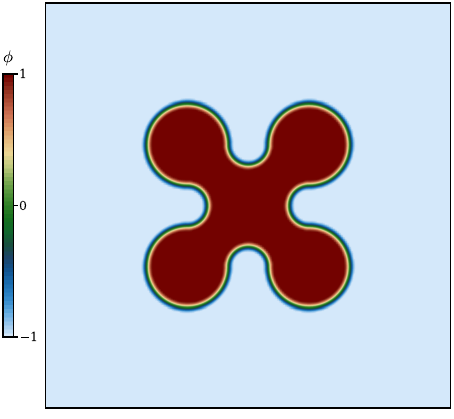}
    \caption{Initial \enquote{flower} geometry on $\Omega=[0,1]^2$, featuring regions of high positive and negative curvature.}
    \label{fig:flower_ic}
  \end{minipage}\hfill
  \begin{minipage}[c]{0.48\linewidth}
    \centering
    \includegraphics[width=\linewidth]{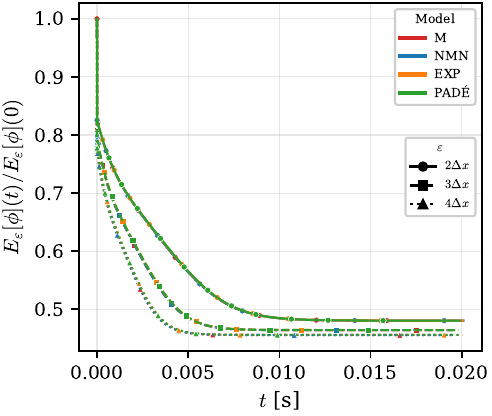}
    \caption{Energy decay $E(t)$ during relaxation. The rapid initial drop corresponds to smoothing of the flower tips.}
    \label{fig:flower_energy_decay}
  \end{minipage}
\end{figure}

\paragraph{Results and Discussion.}
The relaxation of the ``flower'' shape separates the error dynamics into two distinct regimes: a fast, high-curvature initial transient ($t < 0.01$) and a slow, quasi-circular drift ($t > 0.01$). This separation allows us to isolate the impact of the dynamic moment $\mathcal{J}_1$.
The early evolution is dominated by regions of large mean curvature $H$, where the tips retract and valleys bulge. According to the error expansion $\ErrV \propto \varepsilon^2 \int_\Gamma H \,\mathcal{C}_1$, this is where the interfacial defect is maximized.
Figure~\ref{fig:flower_volume} clearly demonstrates the efficacy of the moment balance. The M-model (red) suffers an immediate, massive drop in volume during the first few time steps, driven by bulk leakage. The NMN model (blue) corrects the bulk leakage but still exhibits a visible initial ``jump'' and oscillation, corresponding to the non-zero interfacial coefficient $\mathcal{C}_1 \approx -0.645$.
In striking contrast, the EXP and PAD\'E models (orange/green) are virtually insensitive to this high-curvature transient. The error curves remain near zero even during the most violent shape changes, proving that the condition $\mathcal{M}_1 + \mathcal{J}_1 \approx 0$ successfully cancels the leading geometry-dependent error.

Comparing the panels for $\varepsilon=2\Delta x$ through $\varepsilon=4\Delta x$ in Fig.~\ref{fig:flower_volume}, the lower-order models show a strong sensitivity to resolution. The volume drift in the M-model roughly doubles as the interface thickens. The third-order designs are remarkably robust: their error curves are nearly identical across all three resolutions. This aligns with the theoretical prediction that the error is pushed to $\mathcal{O}(\varepsilon^3)$; even at coarse resolutions ($\varepsilon=4\Delta x$), the third-order term remains small enough to prevent significant drift.

\begin{figure}[h]
  \centering
  \includegraphics[width=\linewidth]{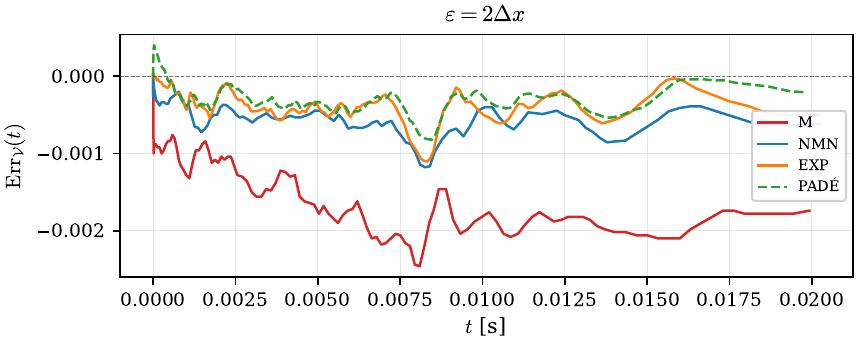}
  \includegraphics[width=\linewidth]{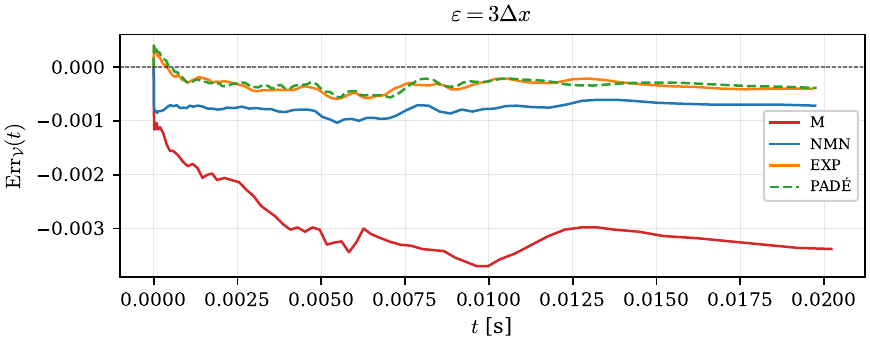}
  \includegraphics[width=\linewidth]{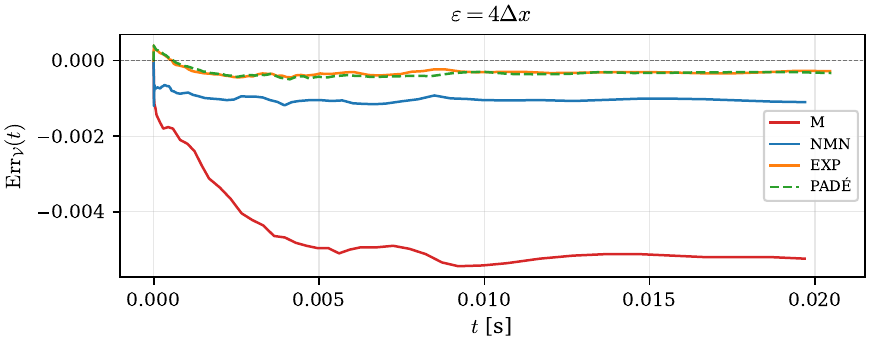}
  \caption{Volume conservation error $\ErrV(t)$ for the flower relaxation across three interface widths ($\varepsilon/\Delta x \in \{2,3,4\}$). The systematic hierarchy of conservation quality is evident: M $<$ NMN $<$ EXP/Pad\'e.}
  \label{fig:flower_volume}
\end{figure}

\begin{table}[h]
\centering
\caption{Computational cost and solver statistics for the flower relaxation ($T=0.02$\,s). The third-order models (EXP, Pad\'e) incur negligible overhead compared to NMN while providing superior conservation.\vspace{0.4em}}
\label{tab:flower_cpu}
\begin{tabular}{llcccc}
\toprule
\(\varepsilon/\Delta x\) & Model & Wall time [s] & Avg. \(\Delta t\) [\(\mu\)s] & Avg. Picard iters & Avg. GMRES iters \\
\midrule
2 & M & 1084.3 & 164.16 & 19.21 & 8.39 \\
2 & NMN $(k=1)$ & 1094.0 & 141.20 & 20.14 & 9.44 \\
2 & EXP $(k=1)$ & 1175.7 & 122.80 & 20.11 & 9.03 \\
2 & PAD\'E $(k=1)$ & 1245.9 & 107.14 & 19.59 & 8.18 \\
\midrule
3 & M & 600.2 & 290.31 & 17.41 & 9.91 \\
3 & NMN $(k=1)$ & 885.8 & 132.30 & 19.88 & 9.23 \\
3 & EXP $(k=1)$ & 832.6 & 187.91 & 18.85 & 6.34 \\
3 & PAD\'E $(k=1)$ & 941.0 & 165.25 & 18.58 & 6.96 \\
\midrule
4 & M & 412.2 & 396.54 & 15.08 & 8.95 \\
4 & NMN $(k=1)$ & 970.5 & 100.98 & 19.57 & 8.79 \\
4 & EXP $(k=1)$ & 783.8 & 247.44 & 17.84 & 6.82 \\
4 & PAD\'E $(k=1)$ & 924.1 & 176.42 & 18.53 & 6.09 \\
\bottomrule
\end{tabular}
\end{table}

Table~\ref{tab:flower_volume} quantifies the accumulated late-time error. The EXP and PAD\'E kernels achieve a $\sim 10\times$ reduction in error compared to the baseline M-model and a $\sim 2\times$ improvement over NMN. Crucially, while NMN volume loss accumulates linearly over time (constant negative slope), the balanced kernels show a negligible trend.
Regarding computational cost, Table~\ref{tab:flower_cpu} indicates that the third-order kernels incur only minor overhead. It should be noted that the reported wall times are not definitive benchmarks, as they depend heavily on the specific settings of the adaptive time-stepping strategy (e.g., the target number of Picard iterations, set here to $\approx 20$), the nonlinear tolerance, and the predictor used for the initial guess $\phi^0$. However, under identical solver configurations, the EXP and PAD\'E models behave similarly to the standard NMN model, confirming that the improved conservation properties do not require a fundamentally more expensive solution procedure.

\begin{figure}[p]
  \centering
  \includegraphics[width=\linewidth]{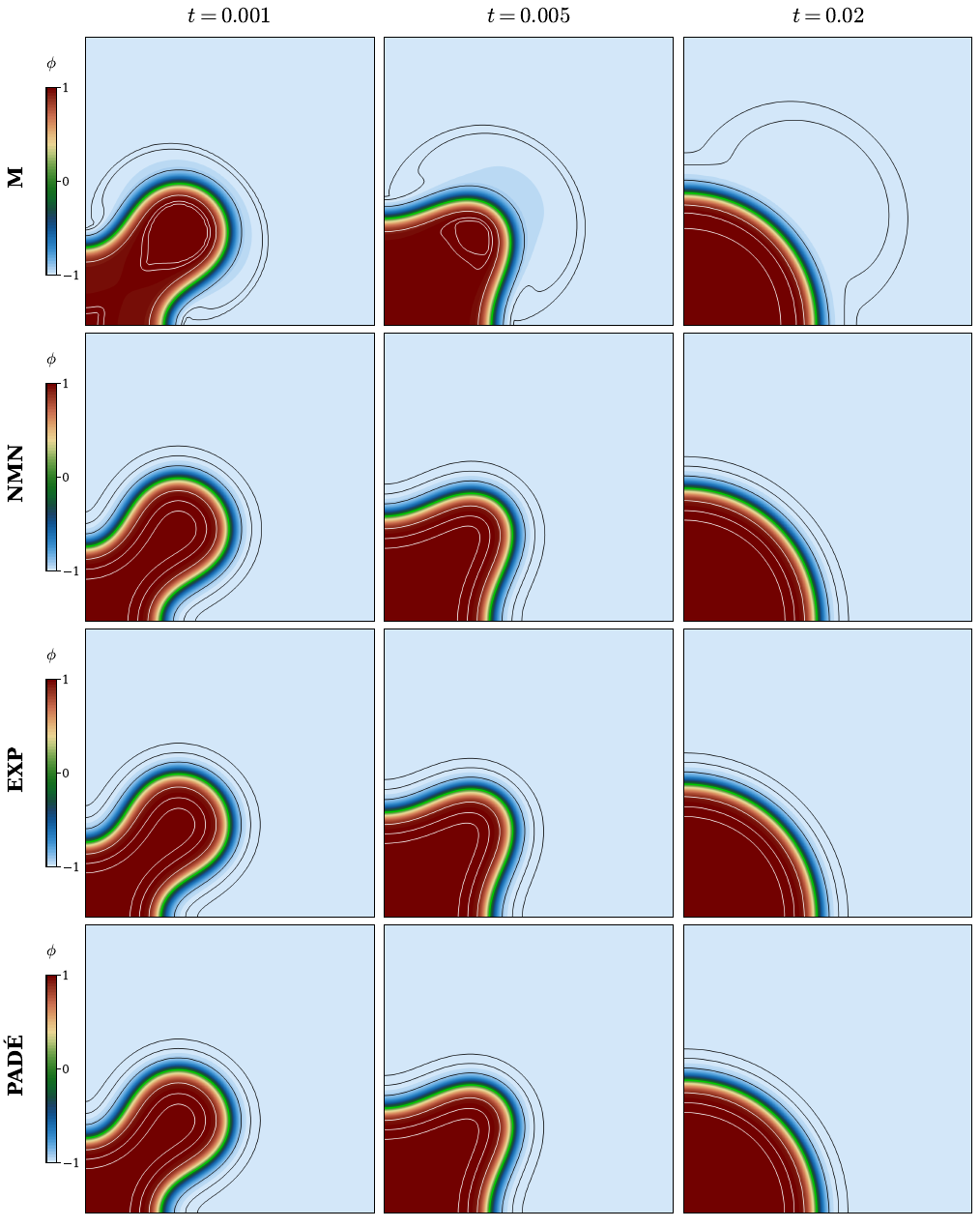}
  \caption{Flower relaxation snapshots for $\varepsilon=2\Delta x$. \textbf{Visual overlay:} White contours indicate bulk saturation levels $\phi \in \{0.9, 0.99, 0.999\}$; black contours indicate $\phi \in \{-0.9, -0.99, -0.999\}$. The tight contour grouping in the EXP/Pad\'e results demonstrates superior bulk preservation compared to the diffuse leakage seen in the M-model.}
  \label{fig:flower_snap_2dx}
\end{figure}

\begin{figure}[p]
  \centering
  \includegraphics[width=\linewidth]{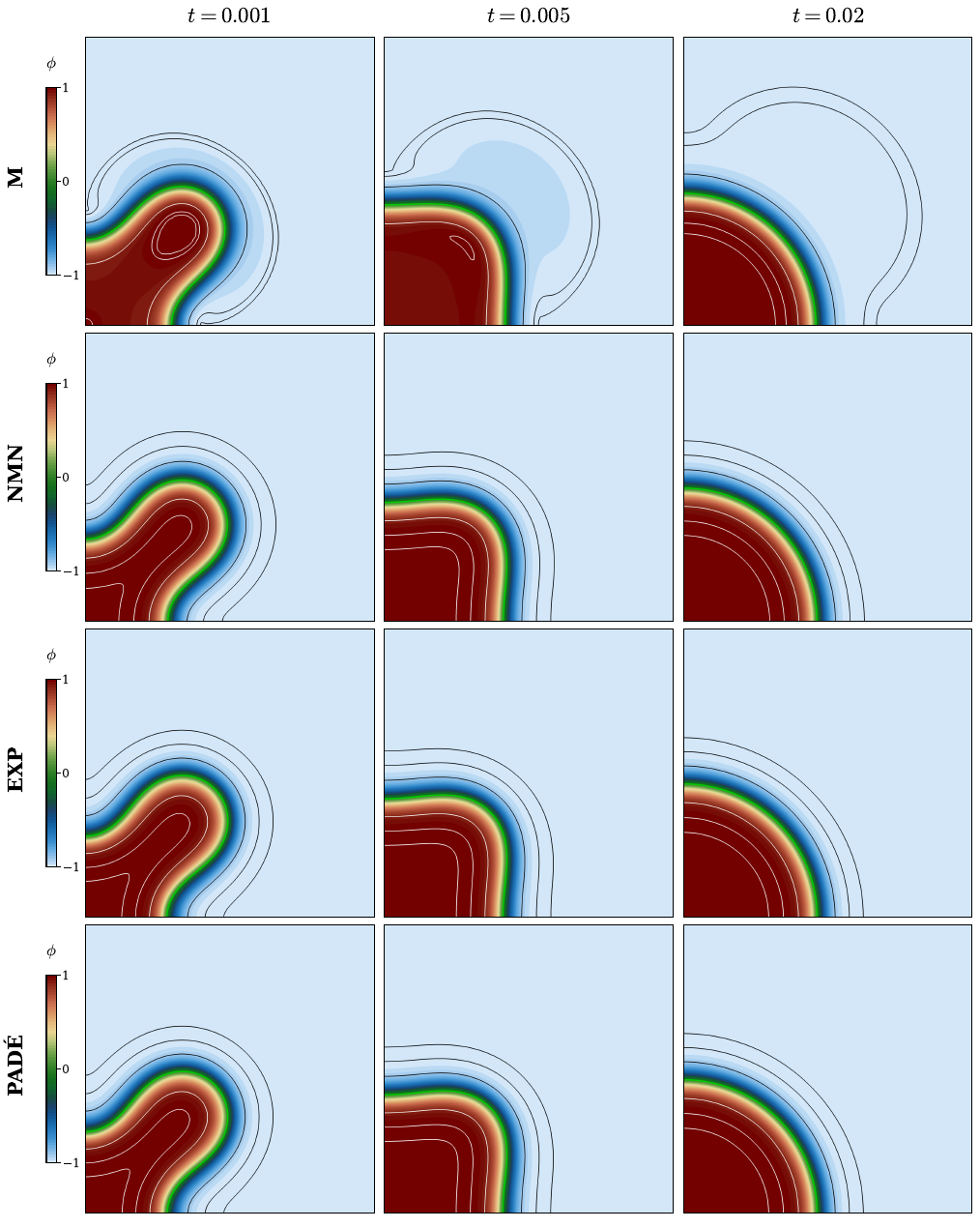}
  \caption{Flower relaxation snapshots for $\varepsilon=3\Delta x$. Contours: White=$\phi \in \{0.9, 0.99, 0.999\}$, Black=$\phi \in \{-0.9, -0.99, -0.999\}$. The third-order models maintain sharper saturation profiles as the interface width increases.}
  \label{fig:flower_snap_3dx}
\end{figure}

\begin{figure}[p]
  \centering
  \includegraphics[width=\linewidth]{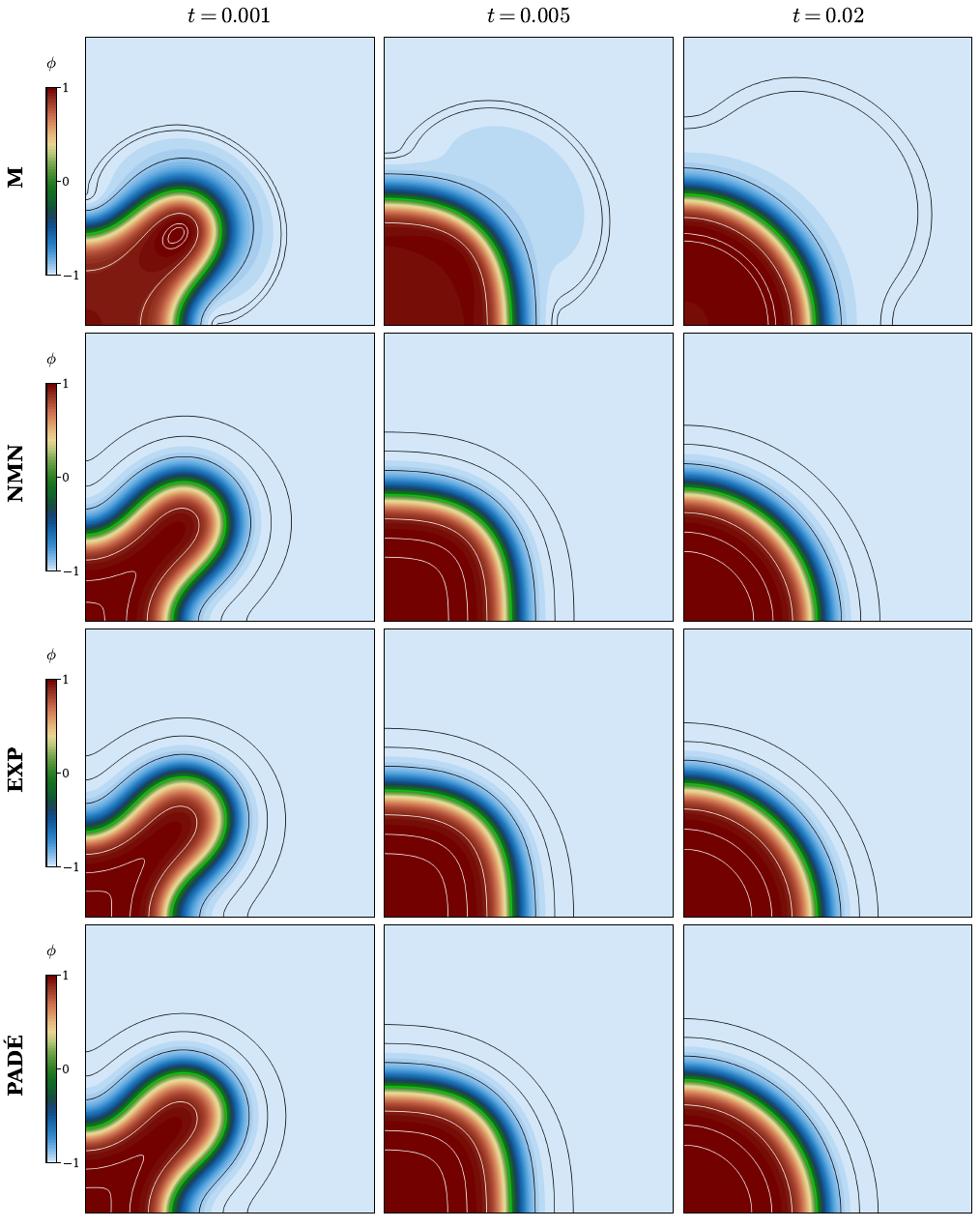}
  \caption{Flower relaxation snapshots for $\varepsilon=4\Delta x$. Contours: White=$\phi \in \{0.9, 0.99, 0.999\}$, Black=$\phi \in \{-0.9, -0.99, -0.999\}$. Despite the coarse interface, the Pad\'e and EXP kernels prevent the spurious bulk gradients observed in the M-model.}
  \label{fig:flower_snap_4dx}
\end{figure}

\begin{table}[h]
\centering
\caption{Late-time volume conservation error $|\ErrV|$ for the flower relaxation. High-order models
(EXP, Pad\'e) achieve nearly $\varepsilon$-independent error and $\sim$10$\times$ improvement over M.\vspace{0.4em}}
\label{tab:flower_volume}
\begin{tabular}{lcccc}
\toprule
Model & $\varepsilon=2\Delta x$ & $\varepsilon=3\Delta x$ & $\varepsilon=4\Delta x$ & Avg. improvement \\
\midrule
M              & $1.92\times10^{-3}$ & $3.29\times10^{-3}$ & $5.22\times10^{-3}$ & (baseline) \\
NMN $(k=1)$    & $5.96\times10^{-4}$ & $7.76\times10^{-4}$ & $9.89\times10^{-4}$ & $4.3\times$ \\
EXP $(k=1)$    & $3.88\times10^{-4}$ & $3.49\times10^{-4}$ & $3.53\times10^{-4}$ & $9.7\times$ \\
PAD\'E $(k=1)$ & $3.62\times10^{-4}$ & $3.77\times10^{-4}$ & $3.82\times10^{-4}$ & $9.2\times$ \\
\bottomrule
\end{tabular}
\end{table}

\clearpage
\section{Conclusions and outlook}

We clarified the mechanism governing \emph{geometric} volume drift in conservation-improved
Cahn--Hilliard (CH--IC) models by separating two contributions that are often conflated:
(i) a \emph{bulk} channel controlled by the endpoint degeneracy of the conserved mapping $Q$,
and (ii) an \emph{interfacial} channel controlled by an explicit combined moment
$\mathcal{C}_1[Q]=\mathcal{M}_1[Q]+\mathcal{J}_1[Q]$ arising from the Jacobian correction and the curvature-induced profile
distortion $\Phi_1$. For odd $Q$, the $\mathcal{O}(\varepsilon)$ interfacial contribution cancels by symmetry,
so the leading interfacial defect is $\mathcal{O}(\varepsilon^2)$ and admits closed one-dimensional
quadrature formulas for $\mathcal{M}_1$ and $\mathcal{J}_1$, enabling a practical inverse-design workflow.

The resulting design logic is transparent: (a) enforce sufficient endpoint degeneracy of $Q'$ so that the
\emph{bulk} contribution is at most $\mathcal{O}(\varepsilon^3)$, and (b) tune the kernel shape so that the
\emph{interfacial} coefficient $\mathcal{C}_1[Q]$ vanishes, promoting geometric-volume accuracy to
$\mathcal{O}(\varepsilon^3)$ at the model level. In the numerical benchmarks, the moment-balanced shaped
kernels (EXP and PAD\'E) deliver nearly $\varepsilon$-independent late-time geometric-volume error and
an order-of-magnitude improvement over the baseline mass-conserving model, while maintaining robust
energy dissipation and negligible additional computational overhead.

\paragraph{Outlook.}
Several directions appear particularly promising:

\begin{itemize}
  \item \textbf{Improved nonlinear solvers beyond baseline Picard.}
  While the present block-coupled Picard iteration is robust, sharper kernel degeneracies and stiff
  high-curvature transients motivate improved fixed-point strategies (e.g.\ Anderson-accelerated Picard,
  quasi-Newton updates, or physics-informed block preconditioners) that preserve the discrete
  $Q^{n+1}-Q^n$ conservation update and the coercive Eyre-type stabilization.

  \item \textbf{Restricted-energy (V-model) within the same moment framework.}
  The doubly-degenerate restricted-energy formulation of Salvalaglio \emph{et al.} \cite{salvalaglio_2021} fits naturally into the
  present bulk/interface decomposition. Using
  \[
    E_\varepsilon^{V}[\phi] \;=\; \int_\Omega g_0(\phi)\left(\frac{1}{\varepsilon}F(\phi)
    + \frac{\varepsilon}{2}|\nabla\phi|^2\right)\,dx,
    \qquad g_0(\phi)=\frac{1}{\gamma(1-\phi^2)^p},
  \]
  one finds that the leading inner profile remains the standard $\tanh$ heteroclinic, while the
  curvature correction $\Phi_1$ changes through a weighted Sturm--Liouville operator. For the
  \emph{mass} proxy $Q(\phi)=\phi$, our analysis yields a unique ``sweet spot''
  $p^\star\approx 1.5$ at which the leading \emph{interfacial} $\mathcal{O}(\varepsilon^2)$ defect cancels.
  A key next step is to couple the restricted-energy idea with a $Q$-constraint (Onsager form),
  e.g.\ with $Q'(\phi)\propto (1-\phi^2)^k$ (including $k=1$ with endpoint-vanishing shaping),
  so that \emph{bulk} and \emph{interfacial} error channels can be suppressed simultaneously by a joint
  tuning of $(p,\;Q)$.
\item \textbf{Skew (commutator) flux form for Onsager CH--IC.}
A promising direction is to re-express the Onsager CH--IC dynamics in a \emph{skew/commutator} flux form that avoids placing the scaled chemical potential $\mu/Q'(\phi)$ inside a gradient. In the variational formulation one has the conservative law
$\partial_t Q(\phi)+\nabla\!\cdot J=0$ with $J=-M(\phi)\nabla(\mu/Q'(\phi))$.
If the mobility is chosen compatibly as $M(\phi)=(Q'(\phi))^2\widetilde M(\phi)$, then the flux admits the exact identity
\[
J=-\widetilde M(\phi)\Big(Q'(\phi)\nabla\mu-\mu\nabla Q'(\phi)\Big),
\]
so that no division by $Q'$ appears \emph{inside} spatial derivatives. This purely algebraic rewrite may be advantageous for conservative CH--NS coupling and for fully implicit discretizations, and it motivates studying which mobility--kernel pairs preserve the desired dissipation and high-order geometric-volume properties within the same moment framework.
\end{itemize}

\section*{Acknowledgments}
The author gratefully acknowledge support form Institute of Thermomechanics, Czech Academy of Sciences (RVO:61388998). He would also like to acknowledge institutional support from Czech Technical University in Prague (SGS25/123/OHK2/3T/12).

\appendix

\section{Matched Asymptotic Analysis}
\label{app:A}

This appendix provides a detailed matched-asymptotic derivation of the $\varepsilon\to 0$ limit of the unscaled CH--IC model, yielding the corresponding sharp-interface evolution law. The analysis follows the framework of Cahn \emph{et al.}~\cite{cahn_1996}, Pego~\cite{pego_1989}, and Bretin \emph{et al.}~\cite{bretin_2022}, but keeps the physical time variable without slow-time rescaling. As a result, the inner expansion yields a leading quasi-stationarity constraint $V_0=0$, so interface motion enters at $V=\mathcal{O}(\varepsilon)$; the first solvability condition then produces the surface-diffusion law. This ``physical-time'' choice is the natural one for the full hydrodynamic NSCH system, where the Navier--Stokes advection and inertia are posed in the same time variable at finite $\varepsilon$. Moreover, as noted in the main-text remark on mobility prefactor scaling, choosing a prefactor $M_\ast\sim\varepsilon^{a}$ effectively rescales the time scale of the CH relaxation and can shift the apparent velocity hierarchy (e.g.\ recovering an $\mathcal{O}(1)$ interfacial speed in coupled settings without changing the matched-asymptotic structure).

\subsection{Governing equations}
We consider the generalized Cahn--Hilliard dynamics on a bounded domain $\Omega\subset\mathbb{R}^d$,
$d\in\{2,3\}$:
\begin{align}
\partial_t\phi &= N(\phi)\,\nabla\!\cdot\!\Big(M(\phi)\,\nabla\!\big(N(\phi)\,\mu\big)\Big),
\label{eq:A_evol}\\[2pt]
\mu &= \frac{1}{\varepsilon}W'(\phi)-\varepsilon\,\Delta\phi,
\label{eq:A_chem}
\end{align}
where $\varepsilon\ll1$ is the interfacial thickness. We use the standard double-well
$W(\phi)=\tfrac14(1-\phi^2)^2$, the degenerate mobility $M(\phi)=(1-\phi^2)^2$ (so $M(\pm1)=0$), and the
metric factor $N(\phi)=1/Q'(\phi)$, where $Q$ is odd, strictly increasing, and satisfies $Q(\pm1)=\pm1$.

\subsection{Geometry, inner variables, and operator identities}
\label{subsec:geometric_setup}
Let $\Gamma(t)$ be the interface separating $\Omega^+(t)$ and $\Omega^-(t)$, with
$\overline{\Omega}=\overline{\Omega^+(t)}\cup\overline{\Omega^-(t)}$ and
$\Gamma(t)=\overline{\Omega^+(t)}\cap\overline{\Omega^-(t)}$.
Assume $\Gamma(t)$ is a smooth, closed $(d-1)$-dimensional hypersurface evolving smoothly in time.
Let $d(x,t)$ denote the signed distance to $\Gamma(t)$ (positive in $\Omega^+(t)$), and define the unit normal
$\nu=\nabla d$ pointing into $\Omega^+(t)$. There exists $\delta_0>0$ such that $d$ is smooth in the tubular
neighborhood $\mathcal{T}_{\delta_0}(t)=\{x:\,|d(x,t)|<\delta_0\}$ and each $x\in\mathcal{T}_{\delta_0}(t)$ admits a unique
projection $\pi(x,t)\in\Gamma(t)$ with
\[
x=\pi(x,t)+d(x,t)\,\nu(\pi(x,t),t).
\]
We set the stretched normal coordinate $z=d(x,t)/\varepsilon$ and use local surface coordinates $s$ on $\Gamma(t)$.
In the inner region, the fields are represented as
\[
\Phi(z,s,t):=\phi(x,t),\qquad \Upsilon(z,s,t):=\mu(x,t).
\]

\paragraph{Sign convention.}
In Onsager form one may define the physical flux as $J_{\mathrm{Ons}}=-\,M(\phi)\nabla\!\big(N(\phi)\mu\big)$.
In what follows we work with the signless quantity $J:=M(\phi)\nabla\!\big(N(\phi)\mu\big)$, since only $\nabla\!\cdot J$
enters \eqref{eq:A_evol} and all matching relations compare inner and outer expressions within the same convention.

\paragraph{Signed-distance identities.}
We collect the differential-geometric identities for the signed distance function used in the derivation, following
\cite{bretin_2022,ambrosio_2000}; for readability we write explicit formulas in $d=2$, with the extension to $d=3$
obtained by replacing $(\partial_s,\partial_{ss})$ by surface derivatives.
The eikonal relation $|\nabla d|=1$ implies $\nabla d=\nu$. Moreover,
\begin{equation}
\Delta d(x,t)=\sum_{k=1}^{d-1}\frac{\kappa_k(\pi(x,t),t)}{1+d(x,t)\,\kappa_k(\pi(x,t),t)},
\label{eq:laplacian_d_general}
\end{equation}
where $\kappa_k$ are the principal curvatures and $H=\sum_{k=1}^{d-1}\kappa_k$ is the mean curvature on $\Gamma(t)$.
In $d=2$, with arc-length parameter $s$ on $\Gamma(t)$,
\begin{equation}
\Delta d=\frac{H}{1+\varepsilon z H},
\qquad
\nabla S=\frac{\tau}{1+\varepsilon z H},
\qquad
\Delta S=-\frac{\varepsilon z\,\partial_s H}{(1+\varepsilon z H)^3},
\label{eq:d2_distance_identities}
\end{equation}
where $\tau$ is the unit tangent and $S(x,t)$ is the lifted arc-length coordinate.

\paragraph{Transformation of operators (in $d=2$).}
Using $\Phi(z,s,t)=\phi(x,t)$ with $z=d/\varepsilon$ and $s=S(x,t)$, one obtains
\begin{align}
\nabla\phi &=
\frac{1}{\varepsilon}\,\nu\,\partial_z\Phi+\frac{\tau}{1+\varepsilon zH}\,\partial_s\Phi,
\label{eq:grad_transform_compact}\\[2pt]
\Delta\phi &=
\frac{1}{\varepsilon^2}\partial_{zz}\Phi
+\frac{H}{\varepsilon(1+\varepsilon zH)}\,\partial_z\Phi
+\frac{1}{(1+\varepsilon zH)^2}\partial_{ss}\Phi
-\frac{\varepsilon z\,\partial_sH}{(1+\varepsilon zH)^3}\,\partial_s\Phi,
\label{eq:laplacian_full_compact}
\end{align}
and for the normal velocity $V(s,t)=\partial_t X_0(s,t)\cdot\nu(s,t)$,
\begin{equation}
\partial_t\phi
=
-\frac{V}{\varepsilon}\,\partial_z\Phi
+\partial_t^\Gamma\Phi
+(\partial_t S)\,\partial_s\Phi,
\label{eq:time_derivative_compact}
\end{equation}
where $\partial_t^\Gamma$ denotes the time derivative at fixed $(z,s)$.
For the signless quantity $J=M(\phi)\nabla(N(\phi)\mu)$ introduced above, its divergence reads
\begin{equation}
\nabla\!\cdot J
=
\frac{1}{\varepsilon^2}\partial_z\!\Big(M(\Phi)\,\partial_z\big(N(\Phi)\Upsilon\big)\Big)
+\frac{1}{\varepsilon}\frac{H}{1+\varepsilon zH}\,M(\Phi)\,\partial_z\big(N(\Phi)\Upsilon\big)
+\mathcal{T}_1,
\label{eq:div_flux_compact}
\end{equation}
with the tangential remainder
\begin{equation}
\mathcal{T}_1
=
\frac{1}{(1+\varepsilon zH)^2}\partial_s\!\Big(M(\Phi)\,\partial_s\big(N(\Phi)\Upsilon\big)\Big)
-\frac{\varepsilon z\,\partial_sH}{(1+\varepsilon zH)^3}\,M(\Phi)\,\partial_s\big(N(\Phi)\Upsilon\big).
\label{eq:T1_def}
\end{equation}

\paragraph{Asymptotic expansions and matching.}
We postulate outer expansions
\begin{align}
\phi(x,t) &= \phi_0(x,t)+\varepsilon\phi_1(x,t)+\varepsilon^2\phi_2(x,t)+\cdots,
\label{eq:outer_phi_exp_compact}\\
\mu(x,t) &= \varepsilon^{-1}\mu_{-1}(x,t)+\mu_0(x,t)+\varepsilon\mu_1(x,t)+\cdots,
\label{eq:outer_mu_exp_compact}
\end{align}
and inner expansions
\begin{align}
\Phi(z,s,t) &= \Phi_0(z)+\varepsilon\Phi_1(z,s,t)+\varepsilon^2\Phi_2(z,s,t)+\cdots,
\label{eq:inner_Phi_exp_compact}\\
\Upsilon(z,s,t) &= \varepsilon^{-1}\Upsilon_{-1}(z,s,t)+\Upsilon_0(z,s,t)+\varepsilon\Upsilon_1(z,s,t)+\cdots,
\label{eq:inner_Upsilon_exp_compact}
\end{align}
together with $V=V_0+\varepsilon V_1+\varepsilon^2V_2+\cdots$.
Matching in the overlap region is imposed via Van Dyke's principle:
\begin{equation}
\lim_{z\to\pm\infty}\Phi(z,s,t)=\lim_{d\to0^\pm}\phi(x,t),
\qquad
\lim_{z\to\pm\infty}\Upsilon(z,s,t)=\lim_{d\to0^\pm}\mu(x,t),
\label{eq:matching_compact}
\end{equation}
and similarly for flux components.

\subsection{Full inner system (exact curvature factors)}
\label{subsec:full_inner_system}
Substituting \eqref{eq:time_derivative_compact}--\eqref{eq:div_flux_compact} into \eqref{eq:A_evol}--\eqref{eq:A_chem}
yields the full inner problem:
\begin{align}
\Big(-\frac{V}{\varepsilon}\partial_z+\partial_t^\Gamma+(\partial_tS)\partial_s\Big)\Phi
&=
N(\Phi)\Big[
\frac{1}{\varepsilon^2}\partial_z\!\Big(M(\Phi)\,\partial_z\big(N(\Phi)\Upsilon\big)\Big)
+\frac{1}{\varepsilon}\frac{H}{1+\varepsilon zH}\,M(\Phi)\,\partial_z\big(N(\Phi)\Upsilon\big)
+\mathcal{T}_1\Big],
\label{eq:inner_evol_full_compact}\\[4pt]
\Upsilon
&=
\frac{1}{\varepsilon}W'(\Phi)
-\frac{1}{\varepsilon}\partial_{zz}\Phi
-\frac{H}{1+\varepsilon zH}\partial_z\Phi
-\varepsilon\,\mathcal{T}_2,
\label{eq:inner_chem_full_compact}
\end{align}
where $\mathcal{T}_1$ is given by \eqref{eq:T1_def} and
\begin{equation}
\mathcal{T}_2
=
\frac{1}{(1+\varepsilon zH)^2}\partial_{ss}\Phi
-\frac{\varepsilon z\,\partial_sH}{(1+\varepsilon zH)^3}\partial_s\Phi.
\label{eq:T2_def_compact}
\end{equation}
The subsequent order-by-order expansion of \eqref{eq:inner_evol_full_compact}--\eqref{eq:inner_chem_full_compact}
provides the solvability conditions that determine the interfacial dynamics.

\subsection{Order-by-order derivation}
\label{subsec:order_by_order}
We now expand the full inner system \eqref{eq:inner_evol_full_compact}--\eqref{eq:inner_chem_full_compact} in powers of
$\varepsilon$ and collect terms order by order. Throughout we use the inner expansions
\begin{align}
\Phi(z,s,t) &= \Phi_0(z)+\varepsilon\Phi_1(z,s,t)+\varepsilon^2\Phi_2(z,s,t)+\cdots,
\label{eq:Phi_exp_ob}\\
\Upsilon(z,s,t) &= \varepsilon^{-1}\Upsilon_{-1}(z,s,t)+\Upsilon_0(z,s,t)+\varepsilon\Upsilon_1(z,s,t)+\cdots,
\label{eq:Ups_exp_ob}
\end{align}
and $V=V_0+\varepsilon V_1+\varepsilon^2V_2+\cdots$.

\paragraph{Nonlinear compositions.}
For any smooth function $F$, Taylor expansion about $\Phi_0$ yields
\begin{equation}
F(\Phi)
=
F(\Phi_0)
+\varepsilon\,F'(\Phi_0)\,\Phi_1
+\varepsilon^2\!\left(
F'(\Phi_0)\,\Phi_2+\frac12 F''(\Phi_0)\,\Phi_1^2
\right)
+\mathcal{O}(\varepsilon^3).
\label{eq:comp_rule_ob}
\end{equation}
In particular we write
\begin{align}
M(\Phi) &= M_0+\varepsilon M_1+\varepsilon^2 M_2+\mathcal{O}(\varepsilon^3),
\label{eq:M_expand_ob}\\
N(\Phi) &= N_0+\varepsilon N_1+\varepsilon^2 N_2+\mathcal{O}(\varepsilon^3),
\label{eq:N_expand_ob}
\end{align}
with coefficients
\begin{align}
M_0 &= M(\Phi_0), &
M_1 &= M'(\Phi_0)\,\Phi_1, &
M_2 &= M'(\Phi_0)\,\Phi_2+\frac12 M''(\Phi_0)\,\Phi_1^2,
\label{eq:M_coeff_ob}\\
N_0 &= N(\Phi_0), &
N_1 &= N'(\Phi_0)\,\Phi_1, &
N_2 &= N'(\Phi_0)\,\Phi_2+\frac12 N''(\Phi_0)\,\Phi_1^2.
\label{eq:N_coeff_ob}
\end{align}
For the potential term we use
\begin{align}
W'(\Phi)
&=
W'(\Phi_0)
+\varepsilon\,W''(\Phi_0)\,\Phi_1
+\varepsilon^2\!\left(
W''(\Phi_0)\,\Phi_2+\frac12 W^{(3)}(\Phi_0)\,\Phi_1^2
\right)
+\mathcal{O}(\varepsilon^3).
\label{eq:Wp_expand_ob}
\end{align}
Finally, the geometric factors are expanded as
\begin{align}
(1+\varepsilon zH)^{-1} &= 1-\varepsilon zH+\varepsilon^2 z^2H^2+\mathcal{O}(\varepsilon^3),
\label{eq:geom1_ob}\\
(1+\varepsilon zH)^{-2} &= 1-2\varepsilon zH+\mathcal{O}(\varepsilon^2),
\qquad
(1+\varepsilon zH)^{-3} = 1-3\varepsilon zH+\mathcal{O}(\varepsilon^2).
\label{eq:geom23_ob}
\end{align}
\subsubsection{Leading inner profile and vanishing $\Upsilon_{-1}$}

\paragraph{Evolution equation at $\mathcal{O}(\varepsilon^{-3})$.}
From \eqref{eq:inner_evol_full_compact} the highest-order contribution comes from the normal part of
$\partial_z\big(M(\Phi)\partial_z(N(\Phi)\Upsilon)\big)$, yielding
\begin{equation}
0 = N_0\,\partial_z\!\Big(M_0\,\partial_z\big(N_0\,\Upsilon_{-1}\big)\Big).
\label{eq:evol_m3_ob}
\end{equation}
Integrating once in $z$, the quantity $M_0\,\partial_z(N_0\Upsilon_{-1})$ is constant across the layer. Matching the
normal flux with the outer regions and using $M(\pm1)=0$ implies this constant is zero, hence
$\partial_z(N_0\Upsilon_{-1})=0$. Matching with the outer expansion (where $\mu_{-1}=0$) gives
\begin{equation}
\Upsilon_{-1}\equiv 0.
\label{eq:Upsm1_zero_ob}
\end{equation}

\paragraph{Chemical potential equation at $\mathcal{O}(\varepsilon^{-1})$.}
Using \eqref{eq:inner_chem_full_compact}, the $\mathcal{O}(\varepsilon^{-1})$ balance is
\begin{equation}
0 = W'(\Phi_0)-\partial_{zz}\Phi_0.
\label{eq:AC_profile_ob}
\end{equation}
With matching $\Phi_0(\pm\infty)=\pm1$ and the gauge $\Phi_0(0)=0$, we obtain the classical heteroclinic profile
\begin{equation}
\Phi_0(z)=\sigma(z):=\tanh\!\Big(\frac{z}{\sqrt{2}}\Big),\qquad \sigma'=\frac{1}{\sqrt{2}}\,(1-\sigma^2).
\label{eq:sigma_ob}
\end{equation}
We record the constants (used below)
\begin{equation}
c_W:=\int_{-\infty}^{\infty}(\sigma')^2\,dz=\frac{2\sqrt{2}}{3},
\qquad
c_M:=\int_{-\infty}^{\infty}M(\sigma)\,dz=\int_{-\infty}^{\infty}(1-\sigma^2)^2\,dz=\frac{4\sqrt{2}}{3}.
\label{eq:cWcM_ob}
\end{equation}

\subsubsection{First correction and weighted chemical potential}

\paragraph{Evolution equation at $\mathcal{O}(\varepsilon^{-2})$.}
The next order gives
\begin{equation}
0 = N_0\,\partial_z\!\Big(M_0\,\partial_z\big(N_0\,\Upsilon_0\big)\Big),
\label{eq:evol_m2_ob}
\end{equation}
hence by the same flux-matching argument,
\begin{equation}
N(\sigma)\,\Upsilon_0 =: B_1(s,t),
\label{eq:weighted_mu_ob}
\end{equation}
i.e. the weighted chemical potential is constant across the layer at this order.

\paragraph{Chemical potential equation at $\mathcal{O}(1)$.}
Expanding \eqref{eq:inner_chem_full_compact} to $\mathcal{O}(1)$ and using $\Phi_0=\sigma$ yields
\begin{equation}
\Upsilon_0 = W''(\sigma)\,\Phi_1-\partial_{zz}\Phi_1 - H\,\sigma'.
\label{eq:chem_O1_ob}
\end{equation}
With \eqref{eq:weighted_mu_ob}, we may write $\Upsilon_0=B_1/N(\sigma)=B_1\,Q'(\sigma)$, and thus
\begin{equation}
\mathcal{L}\Phi_1
:=
\big(\partial_{zz}-W''(\sigma)\big)\Phi_1
=
-\frac{B_1}{N(\sigma)}-H\,\sigma'.
\label{eq:Phi1_eq_ob}
\end{equation}
Differentiating \eqref{eq:AC_profile_ob} gives $\mathcal{L}\sigma'=0$, hence
$\ker(\mathcal{L})=\mathrm{span}\{\sigma'\}$ and $\mathcal{L}$ is self-adjoint on $L^2(\mathbb{R})$.

\paragraph{Solvability condition and determination of $B_1$ (Fredholm alternative).}
Define the inner linearized operator
\begin{equation}
\mathcal{L}:=\partial_{zz}-W''(\sigma),
\label{eq:L_def_ob}
\end{equation}
so that \eqref{eq:Phi1_eq_ob} becomes
\begin{equation}
\mathcal{L}\Phi_1=-\frac{B_1}{N(\sigma)}-H\sigma'.
\label{eq:Phi1_eq_L_ob}
\end{equation}
The operator $\mathcal{L}$ is self-adjoint on $L^2(\mathbb{R})$ with domain $H^2(\mathbb{R})$.
Differentiating \eqref{eq:AC_profile_ob} yields $\mathcal{L}\sigma'=0$, hence
$\ker(\mathcal{L})=\mathrm{span}\{\sigma'\}$.
By the Fredholm alternative, a bounded solution $\Phi_1\in L^\infty(\mathbb{R})$ exists if and only if the right-hand side
of \eqref{eq:Phi1_eq_L_ob} is $L^2$-orthogonal to $\sigma'$:
\begin{equation}
\left\langle -\frac{B_1}{N(\sigma)}-H\sigma',\,\sigma'\right\rangle_{L^2(\mathbb{R})}=0,
\label{eq:Phi1_solv_ob}
\end{equation}
equivalently,
\begin{equation}
-B_1\int_{-\infty}^{\infty}\frac{\sigma'}{N(\sigma)}\,dz
\;-\;
H\int_{-\infty}^{\infty}(\sigma')^2\,dz
=0.
\label{eq:Phi1_solv_expanded_ob}
\end{equation}
The second integral is $c_W=\int_{-\infty}^{\infty}(\sigma')^2\,dz$.
For the first one we use $N(\phi)=1/Q'(\phi)$ and the substitution $\xi=\sigma(z)$ (so $d\xi=\sigma'(z)\,dz$):
\begin{equation}
c_N:=\int_{-\infty}^{\infty}\frac{\sigma'}{N(\sigma)}\,dz
=\int_{-\infty}^{\infty}Q'(\sigma)\sigma'\,dz
=\int_{-1}^{1}Q'(\xi)\,d\xi
=Q(1)-Q(-1)=2.
\label{eq:cN_ob}
\end{equation}
Thus \eqref{eq:Phi1_solv_expanded_ob} yields
\begin{equation}
B_1(s,t)=-\frac{c_W}{c_N}\,H(s,t)=-\frac{\sqrt{2}}{3}\,H(s,t).
\label{eq:B1_ob}
\end{equation}

\paragraph{Structure of $\Phi_1$.}
With \eqref{eq:B1_ob} and $N(\sigma)=1/Q'(\sigma)$, the forcing in \eqref{eq:Phi1_eq_L_ob} factorizes into a function of $z$
times the curvature $H(s,t)$. Consequently,
\begin{equation}
\Phi_1(z,s,t)=H(s,t)\,\Phi_1(z),
\label{eq:Phi1_factorization}
\end{equation}
where the reduced profile $\Phi_1(z)$ is the unique bounded solution (with the gauge $\Phi_1(0)=0$) of
\begin{equation}
\mathcal{L}\Phi_1
=\frac{\sqrt{2}}{3}\,Q'(\sigma(z))-\sigma'(z),
\qquad
\Phi_1(0)=0,\qquad \Phi_1 \text{ bounded}.
\label{eq:A_Phi1_ODE}
\end{equation}
For the NMN kernel $Q'(\phi)=\tfrac{3}{2}(1-\phi^2)$, one has $Q'(\sigma)=\tfrac{3\sqrt{2}}{2}\sigma'$ and thus the right-hand
side of \eqref{eq:A_Phi1_ODE} vanishes; hence $\Phi_1\equiv 0$.
A convenient integral (reduction-of-order / variation-of-constants) representation of the bounded solution of
\eqref{eq:A_Phi1_ODE} goes back to the inner-profile analysis in Bretin \emph{et al.}~\cite{bretin_2022}
(cf.\ their Lemma~4.1 and formula~(4.14)) and was later streamlined and reused in the ACH--IC framework of
Zhou \emph{et al.}~\cite{zhou_2025_achic}.
Explicit integral representations and reduced formulas for $\Phi_1$ (and the induced moment $\mathcal{J}_1[Q]$) are derived in
Appendix~\ref{app:B}, Section~\ref{sec:B_J1}.

\subsubsection{Interface velocity: $V_0=0$ and $V_1=\frac{4}{9}\Delta_\Gamma H$}

\paragraph{Vanishing leading velocity.}
At $\mathcal{O}(\varepsilon^{-1})$ in \eqref{eq:inner_evol_full_compact}, the time derivative first contributes through
$-(V_0/\varepsilon)\partial_z\Phi_0$. The remaining terms at this order involve $\partial_z(N_0\Upsilon_0)$ and
$\Upsilon_{-1}$; by \eqref{eq:Upsm1_zero_ob} and \eqref{eq:weighted_mu_ob} they vanish identically. Hence
$-V_0\,\sigma'=0$ and therefore
\begin{equation}
V_0=0.
\label{eq:V0_ob}
\end{equation}

\paragraph{Solvability at $\mathcal{O}(1)$.}
At $\mathcal{O}(1)$, with $V_0=0$ and $\Phi_0=\sigma(z)$, the evolution equation reduces to
\begin{equation}
-\,V_1\,\sigma'
=
N_0\,\partial_z\!\Big(M_0\,\partial_z(\,\cdots\,)\Big)
\;+\;
N_0\,\nabla_\Gamma\!\cdot\!\Big(M_0\,\nabla_\Gamma\big(N_0\Upsilon_0\big)\Big),
\label{eq:evol_O1_ob}
\end{equation}
where $(\cdots)$ collects the $\mathcal{O}(1)$ normal-flux corrections (its precise form is immaterial after integrating in $z$).
We multiply \eqref{eq:evol_O1_ob} by $1/N_0=Q'(\sigma)$ and integrate over $z\in\mathbb{R}$.
The first term on the right becomes a boundary term and vanishes because $M(\pm1)=0$. Using $N_0\Upsilon_0=B_1(s,t)$,
the tangential part yields
\begin{equation}
-\;V_1\int_{-\infty}^{\infty}Q'(\sigma)\sigma'\,dz
=
\int_{-\infty}^{\infty}\nabla_\Gamma\!\cdot\!\big(M(\sigma)\,\nabla_\Gamma B_1\big)\,dz
=
\nabla_\Gamma\!\cdot\!\left(\left[\int_{-\infty}^{\infty}M(\sigma)\,dz\right]\nabla_\Gamma B_1\right)
=
c_M\,\Delta_\Gamma B_1.
\label{eq:V1_solv_ob}
\end{equation}
By \eqref{eq:cN_ob}, $\int Q'(\sigma)\sigma'\,dz=c_N=2$, hence
\begin{equation}
V_1=-\frac{c_M}{c_N}\,\Delta_\Gamma B_1.
\label{eq:V1_intermediate_ob}
\end{equation}
Substituting \eqref{eq:B1_ob} and the constants \eqref{eq:cWcM_ob} gives
\begin{equation}
V_1=\frac{c_M\,c_W}{c_N^2}\,\Delta_\Gamma H=\frac{4}{9}\,\Delta_\Gamma H.
\label{eq:V1_ob}
\end{equation}
(For $d=2$, $\Delta_\Gamma=\partial_{ss}$.)

\paragraph{Resulting interfacial law.}
Recalling $V=V_0+\varepsilon V_1+\mathcal{O}(\varepsilon^2)$ and \eqref{eq:V0_ob}--\eqref{eq:V1_ob}, we obtain
\begin{equation}
V=\varepsilon\,\frac{4}{9}\,\Delta_\Gamma H+\mathcal{O}(\varepsilon^2),
\label{eq:interfacial_law_ob}
\end{equation}
i.e. surface diffusion dynamics on the physical time scale (the usual $\mathcal{O}(1)$ surface-diffusion speed is recovered by
a slow-time rescaling).

\section{Volume conservation and moment conditions}
\label{app:B}

This appendix derives the $\varepsilon$-expansion of the \emph{volume error} associated with the conserved quantity
$\int_\Omega Q(\phi_\varepsilon)\,\dOm$ in the CH--IC model. Our goal is twofold:
(i) obtain explicit and efficiently computable formulas for the leading error coefficients, and
(ii) identify kernel design conditions ensuring $\mathcal{O}(\varepsilon^3)$ (or better) approximation of the
sharp-phase volume.

The matched-asymptotic framework and the first inner correction problem are established in Appendix~\ref{app:A}.
Here we focus on:
\begin{enumerate}[label=(\roman*)]
  \item a bulk--interface decomposition of $\ErrV$ and the resulting order-by-order expansion,
  \item closed one-dimensional formulas for the \emph{geometric} moment $\mathcal{M}_1[Q]$ and the \emph{dynamic} moment
        $\mathcal{J}_1[Q]$,
  \item the \emph{moment-balance condition} $\mathcal{M}_1[Q]+\mathcal{J}_1[Q]=0$ cancelling the interfacial
        $\mathcal{O}(\varepsilon^2)$ error.
\end{enumerate}
The detailed reductions for $\Phi_1$ and $\mathcal{J}_1$ are a central analytical ingredient of the improved conservation
mechanism.

\subsection{Preliminaries and design class for $Q$}
\label{sec:B_prelim}

\subsubsection{Sharp vs.\ diffuse volume and the volume error}

Let $\Omega^\pm(t)$ be the sharp phases separated by $\Gamma(t)$. The sharp-phase volume is
\begin{equation}
|\Omega^+(t)|=\int_\Omega \Heav(d(x,t))\,\dOm,
\label{eq:B_Vsharp_def}
\end{equation}
where $d(x,t)$ is the signed distance to $\Gamma(t)$, positive in $\Omega^+(t)$ (Appendix~\ref{app:A}),
and $\Heav$ is the Heaviside step function.

In the CH--IC model we conserve the generalized mass
\begin{equation}
\mathcal{Q}[\phi_\varepsilon]
:=\int_\Omega Q(\phi_\varepsilon)\,\dOm,
\qquad Q(\pm1)=\pm1,\qquad Q\ \text{odd and strictly increasing.}
\label{eq:B_Qmass_def}
\end{equation}
Accordingly we define the conserved diffuse $Q$-volume proxy (cf.\ \eqref{eq:VQ_def} in the main text)
\begin{equation}
\VQ(t):=\frac12\int_\Omega \bigl(1+Q(\phi_\varepsilon)\bigr)\,\dOm,
\qquad \frac{d}{dt}\VQ(t)=0,
\label{eq:B_VQ_def}
\end{equation}
and the geometric-volume error (cf.\ \eqref{eq:EVdefinition})
\begin{equation}
\ErrV(t):=\VQ(t)-|\Omega^+(t)|
=\frac12\int_\Omega \bigl(1+Q(\phi_\varepsilon)\bigr)\,\dOm-\int_\Omega \Heav(d(x,t))\,\dOm.
\label{eq:B_EV_def}
\end{equation}

\subsubsection{Kernel family and normalization}

A convenient design class (used in the main text) is
\begin{equation}
Q'(\phi)=\frac{(1-\phi^2)^k\,S(\phi)}{B_k[S]},
\qquad
B_k[S]:=\int_0^1 (1-\xi^2)^k\,S(\xi)\,d\xi,
\label{eq:B_Q_general}
\end{equation}
where $k\ge 1$ is the endpoint degeneracy order and $S(\phi)>0$ is an even shaping function.
The normalization enforces
\begin{equation}
\int_{-1}^1 Q'(\phi)\,d\phi=2,
\qquad\text{equivalently}\qquad
Q(1)-Q(-1)=2,
\label{eq:B_Q_normalization}
\end{equation}
and with $Q$ odd implies $Q(1)=1$.

\begin{remark}[Normalization matters for shaped kernels]
When $S\not\equiv 1$, the constant $B_k[S]$ depends on the shaping parameters. In particular,
\[
Q(u)=\int_0^u Q'(t)\,dt
=\frac{\int_0^u (1-t^2)^kS(t)\,dt}{\int_0^1 (1-t^2)^kS(t)\,dt},
\qquad u\in[-1,1],
\]
and it is the ratio (not only the numerator) that guarantees $Q(1)=1$.
\end{remark}

\subsection{Bulk--interface decomposition and the $\varepsilon$-expansion}
\label{sec:B_decomp}

Let $d(x,t)$ be the signed distance to $\Gamma(t)$, positive in $\Omega^+(t)$.
We use the standard decomposition with $\delta=C\varepsilon|\ln\varepsilon|$,
\[
\Omega=\Omega^+_\delta(t)\cup \Gamma_\delta(t)\cup \Omega^-_\delta(t),
\qquad
\Gamma_\delta(t):=\{x:|d(x,t)|\le \delta\}.
\]
In the interfacial tube we introduce the stretched coordinate $z=d/\varepsilon$ and the surface variable $s\in\Gamma(t)$,
so that $x=X(s,t)+\varepsilon z\,\nu(s,t)$. The Jacobian has the curvature expansion
\begin{equation}
\dOm=\varepsilon\,\mathcal{J}(\varepsilon z,s)\,dz\,dA(s),
\qquad
\mathcal{J}(\varepsilon z,s)
=1+\varepsilon z H(s)+\varepsilon^2 z^2 G(s)+\mathcal{O}(\varepsilon^3),
\label{eq:B_Jacobian_expand}
\end{equation}
where $H=\sum_{i=1}^{d-1}\kappa_i$ and $G=\sum_{i<j}\kappa_i\kappa_j$ (for $d=3$, $G$ is the Gaussian curvature).

In the inner region, Appendix~\ref{app:A} gives the profile expansion
\begin{equation}
\phi_\varepsilon(x,t)=\Phi(z,s,t)
=\sigma(z)+\varepsilon\,H(s,t)\,\Phi_1(z)+\mathcal{O}(\varepsilon^2),
\qquad
\sigma(z)=\tanh(z/\sqrt2),
\label{eq:B_inner_phi_exp}
\end{equation}
with the factorization $\Phi_1(z,s,t)=H(s,t)\Phi_1(z)$ and $\Phi_1(z)$ solving the reduced ODE
\eqref{eq:A_Phi1_ODE}.

\subsubsection{Leading cancellation}
Insert \eqref{eq:B_inner_phi_exp} into \eqref{eq:B_EV_def} restricted to $\Gamma_\delta(t)$ and expand
\[
Q(\Phi)=Q(\sigma)+\varepsilon\,H\,\Phi_1\,Q'(\sigma)+\mathcal{O}(\varepsilon^2),
\qquad
\mathcal{J}=1+\varepsilon zH+\mathcal{O}(\varepsilon^2).
\]
The interfacial contribution becomes
\[
\ErrV^{\mathrm{int}}
=
\frac12\int_{\Gamma}\int_{-\infty}^{\infty}
\Big(Q(\sigma(z))-(2\Heav(z)-1)\Big)\,\varepsilon\,dz\,dA
+\mathcal{O}(\varepsilon^2),
\]
which vanishes at order $\varepsilon$ by odd/even symmetry (since $Q$ and $\sigma$ are odd, while $\Heav(z)-\tfrac12$
is odd). Thus the leading interfacial error is $\mathcal{O}(\varepsilon^2)$.

\subsection{The $\mathcal{O}(\varepsilon^2)$ interfacial error and two moments}
\label{sec:B_E2}

Carrying the above expansion one order further yields
\begin{equation}
\ErrV^{(2)}
=\frac{\varepsilon^2}{2}\int_{\Gamma(t)} H(s)\,\Bigl(\mathcal{M}_1[Q]+\mathcal{J}_1[Q]\Bigr)\,dA(s),
\label{eq:B_E2_simplified}
\end{equation}
where the \emph{geometric moment} and the \emph{dynamic moment} are defined by
\begin{align}
\mathcal{M}_1[Q]
&:=\int_{-\infty}^{\infty} z\,\Bigl(1+Q(\sigma(z))-2\Heav(z)\Bigr)\,dz,
\label{eq:B_M1_def}\\[2mm]
\mathcal{J}_1[Q]
&:=\int_{-\infty}^{\infty}\Phi_1(z)\,Q'(\sigma(z))\,dz.
\label{eq:B_J1_def}
\end{align}
Both moments depend only on the kernel $Q$ (through $Q$ and $Q'$), while geometry enters \eqref{eq:B_E2_simplified} only through
the factor $H(s)$.

\begin{remark}[Interpretation]
$\mathcal{M}_1$ comes from the Jacobian correction $\varepsilon zH$ (a purely geometric mismatch between diffuse and sharp tubes),
while $\mathcal{J}_1$ comes from the curvature-induced profile correction $\varepsilon H\Phi_1$.
The key design freedom is to tune $Q$ so that these two effects cancel.
\end{remark}

\subsection{Efficient formula and sign of the geometric moment $\mathcal{M}_1[Q]$}
\label{sec:B_M1}

\begin{theorem}[Single-integral formula for $\mathcal{M}_1$]
\label{thm:B_M1_arctanh2}
For any odd, increasing $Q$ with $Q(1)=1$,
\begin{equation}
\mathcal{M}_1[Q] = -2\int_0^1 Q'(u)\,\operatorname{arctanh}^2(u)\,du.
\label{eq:B_M1_arctanh2}
\end{equation}
In particular, for \eqref{eq:B_Q_general},
\[
\mathcal{M}_1[Q]
=
-\frac{2}{B_k[S]}\int_0^1 (1-u^2)^k S(u)\,\operatorname{arctanh}^2(u)\,du.
\]
\end{theorem}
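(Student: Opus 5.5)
The plan is to first reduce the defining integral \eqref{eq:B_M1_def} to a half-line integral by symmetry, and then convert it to the phase variable $u=\sigma(z)$ by Tonelli's theorem. Integrating by parts would also work, but the Tonelli route avoids boundary terms.

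\emph{Step 1 (symmetry).} Write $1+Q(\sigma(z))-2\Heav(z)=Q(\sigma(z))-\operatorname{sgn}(z)$ for $z\neq0$. Since $Q$ and $\sigma$ are odd, this bracket is an odd function of $z$, so $z\,(Q(\sigma)-\operatorname{sgn} z)$ is even. Hence
\[
\mathcal{M}_1[Q]=2\int_0^\infty z\,\bigl(Q(\sigma(z))-1\bigr)\,dz=-2\int_0^\infty z\,\bigl(1-Q(\sigma(z))\bigr)\,dz ,
\]
where the integrand $z\,(1-Q(\sigma(z)))$ is nonnegative because $Q$ is increasing with $Q(1)=1$.

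\emph{Step 2 (layer-cake / Tonelli).} For $z>0$ write $1-Q(\sigma(z))=\int_0^1 \mathbf 1_{\{u>\sigma(z)\}}\,Q'(u)\,du$, using $Q(1)=1$ and absolute continuity of $Q$. Since $\sigma$ is a strictly increasing bijection $[0,\infty)\to[0,1)$, we have $u>\sigma(z)\iff z<\sigma^{-1}(u)$. The integrand $z\,Q'(u)\,\mathbf 1_{\{z<\sigma^{-1}(u)\}}$ is nonnegative, so Tonelli lets us swap the order of integration without any a priori integrability assumption (both sides may be $+\infty$ simultaneously). This gives
\[
\int_0^\infty z\,\bigl(1-Q(\sigma(z))\bigr)\,dz=\int_0^1 Q'(u)\int_0^{\sigma^{-1}(u)} z\,dz\,du=\frac12\int_0^1 Q'(u)\,\bigl(\sigma^{-1}(u)\bigr)^2\,du .
\]

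\emph{Step 3 (explicit inverse).} From \eqref{eq:sigma_sol}, $\sigma^{-1}(u)=\sqrt2\,\operatorname{arctanh}(u)$, so $(\sigma^{-1}(u))^2=2\operatorname{arctanh}^2(u)$. Combining with Steps 1 and 2 yields $\mathcal{M}_1[Q]=-2\int_0^1 Q'(u)\operatorname{arctanh}^2(u)\,du$, which is \eqref{eq:B_M1_arctanh2}. Substituting the kernel class \eqref{eq:B_Q_general} then gives the stated formula with the factor $1/B_k[S]$.

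The only delicate point is convergence. Step 2 guarantees that the identity holds in $[0,\infty]$ in all cases. Finiteness follows for kernels that are bounded near $u=1$, since $\operatorname{arctanh}^2$ has only a logarithmic-squared singularity there. For the class \eqref{eq:B_Q_general} with $k\ge1$ it is even more benign, because $Q'$ vanishes at $u=1$.

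If one prefers integration by parts on $\int_0^\infty z\,(Q(\sigma)-1)\,dz$, the boundary term $\tfrac{z^2}{2}\bigl(1-Q(\sigma(z))\bigr)\to0$ must be checked. It does vanish: boundedness of $Q'$ gives $1-Q(\sigma)\le C(1-\sigma)=O(e^{-\sqrt2 z})$. Avoiding exactly this check is the reason for choosing the Tonelli route. The same computation also yields Lemma~\ref{lem:M1negative}, since $Q'>0$ on $(0,1)$ makes the integral strictly positive.
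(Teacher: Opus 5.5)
Your proof is correct and follows the paper's argument. You reduce by oddness to $\mathcal{M}_1[Q]=-2\int_0^\infty z\,(1-Q(\sigma(z)))\,dz$, write $1-Q$ as an integral of $Q'$, and interchange the order of integration; the only difference is cosmetic, since the paper first substitutes $u=\sigma(z)$ and then uses $\frac{d}{du}\tfrac12\operatorname{arctanh}^2(u)=\operatorname{arctanh}(u)/(1-u^2)$, whereas you apply Tonelli directly in $(z,u)$ so the inner integral is simply $\int_0^{\sigma^{-1}(u)}z\,dz$, which also justifies the interchange by nonnegativity, a point the paper does not address.
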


\begin{proof}
Split \eqref{eq:B_M1_def} at $z=0$ and use $\Heav(z)=1$ for $z>0$, $\Heav(z)=0$ for $z<0$:
\[
\mathcal{M}_1
=
\int_{-\infty}^{0} z\,(1+Q(\sigma(z)))\,dz
+\int_{0}^{\infty} z\,(Q(\sigma(z))-1)\,dz.
\]
With $z=-\eta$ in the first term and oddness of $Q$ and $\sigma$,
\[
\int_{-\infty}^{0} z\,(1+Q(\sigma(z)))\,dz
=
-\int_{0}^{\infty}\eta\,(1-Q(\sigma(\eta)))\,d\eta,
\qquad
\int_{0}^{\infty} z\,(Q(\sigma(z))-1)\,dz
=
-\int_{0}^{\infty}z\,(1-Q(\sigma(z)))\,dz,
\]
hence $\mathcal{M}_1=-2\int_{0}^{\infty} z\,(1-Q(\sigma(z)))\,dz$.
Now substitute $u=\sigma(z)=\tanh(z/\sqrt2)$, so $z=\sqrt2\,\operatorname{arctanh}(u)$ and
$dz=\frac{\sqrt2}{1-u^2}\,du$, which gives
\[
\mathcal{M}_1
=
-4\int_{0}^{1}\frac{(1-Q(u))\,\operatorname{arctanh}(u)}{1-u^2}\,du.
\]
Write $1-Q(u)=\int_u^1 Q'(\zeta)\,d\zeta$ and apply Fubini:
\[
\mathcal{M}_1
=
-4\int_0^1 Q'(\zeta)\left(\int_0^\zeta \frac{\operatorname{arctanh}(u)}{1-u^2}\,du\right)d\zeta.
\]
Since $\frac{d}{du}\big(\tfrac12\operatorname{arctanh}^2(u)\big)=\frac{\operatorname{arctanh}(u)}{1-u^2}$,
the inner integral is $\tfrac12\operatorname{arctanh}^2(\zeta)$, yielding \eqref{eq:B_M1_arctanh2}.
\end{proof}

\begin{lemma}[Strict negativity of $\mathcal{M}_1$]
\label{lem:M1negative_appendix}
If $Q$ is strictly increasing on $(-1,1)$, then $\mathcal{M}_1[Q]<0$.
\end{lemma}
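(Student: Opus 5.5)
The plan is to read the sign off directly from the single-integral representation of Theorem~\ref{thm:B_M1_arctanh2}, namely
\[
\mathcal{M}_1[Q]=-2\int_0^1 Q'(u)\,\operatorname{arctanh}^2(u)\,du .
\]
The statement assumes $Q$ is odd and normalized with $Q(1)=1$, as in \eqref{eq:Qassum}, so the theorem applies. Its integrand is a product of two factors on $(0,1)$. The first, $Q'(u)$, is nonnegative, and strictly positive wherever $Q$ is strictly increasing in the differentiable sense. The second, $\operatorname{arctanh}^2(u)$, is strictly positive for $u\in(0,1)$. The integral is therefore nonnegative. It remains to show it is strictly positive and finite, so that $\mathcal{M}_1$ is a well-defined strictly negative number.

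For strict positivity, I would not rely on the pointwise condition $Q'>0$. A strictly increasing $C^1$ function may have $Q'$ vanishing at isolated points. Instead I would argue by contradiction. Suppose the integral vanishes. Since the integrand is nonnegative and continuous on $(0,1)$, it must vanish identically there. As $\operatorname{arctanh}^2>0$ on $(0,1)$, this forces $Q'\equiv0$ on $(0,1)$. Then $Q$ is constant on $[0,1]$, contradicting $Q(0)=0<1=Q(1)$, or equally contradicting strict monotonicity. Under the paper's standing assumption $Q'>0$ on $(-1,1)$ the conclusion is immediate anyway.

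The only delicate point, and the step I expect to need care, is finiteness near $u=1$, where $\operatorname{arctanh}^2(u)\sim\tfrac14\ln^2(1-u)$ blows up. Because this singularity is only logarithmic, it is integrable against any bounded $Q'$. I would make this explicit by bounding $Q'\le\|Q'\|_\infty$ on $[0,1]$, which holds for the design class \eqref{eq:B_Q_general} and in fact $Q'$ vanishes there to order $k$. Finiteness then follows from the finiteness of $\int_0^1\ln^2(1-u)\,du$.

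Equivalently, one can bypass the theorem and use the intermediate form $\mathcal{M}_1=-2\int_0^\infty z\,(1-Q(\sigma(z)))\,dz$ from its proof. Here $1-Q(\sigma(z))>0$ for all $z>0$, by strict monotonicity together with $Q(1)=1$ and $\sigma(z)<1$. Moreover the integrand decays exponentially, since $1-\sigma\sim 2e^{-\sqrt2 z}$ and $1-Q\le \|Q'\|_\infty(1-\sigma)$. This gives both strict negativity and convergence at once. It also supports the "sagging" interpretation given after Lemma~\ref{lem:M1negative}.
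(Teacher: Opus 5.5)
Your proposal is correct and follows the paper's own proof, which reads the sign directly off the single-integral formula $\mathcal{M}_1[Q]=-2\int_0^1 Q'(u)\operatorname{arctanh}^2(u)\,du$ by noting that the integrand is strictly positive on $(0,1)$. Your additional points make explicit what the paper's one-line argument takes for granted: the handling of possible isolated zeros of $Q'$, the integrability of the logarithmic singularity at $u=1$, and, as an alternative, the exponentially decaying form $-2\int_0^\infty z\,(1-Q(\sigma(z)))\,dz$.
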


\begin{proof}
In \eqref{eq:B_M1_arctanh2}, the integrand $Q'(u)\operatorname{arctanh}^2(u)$ is strictly positive for $u\in(0,1)$.
\end{proof}

\begin{remark}[Closed forms for $S\equiv 1$ and why shaped kernels are different]
\label{rem:M1_S1_closed}
For $S\equiv 1$ define
\[
I_k:=\int_0^1 (1-u^2)^k\,\operatorname{arctanh}^2(u)\,du.
\]
With the substitution $x=\operatorname{arctanh}(u)$ (so $u=\tanh x$, $du=\operatorname{sech}^2(x)\,dx$,
and $1-u^2=\operatorname{sech}^2(x)$), one obtains
\[
I_k=\int_0^\infty x^2\,\operatorname{sech}^{2k+2}(x)\,dx.
\]
A classical evaluation (e.g.\ via beta/gamma identities together with an integration-by-parts recurrence) yields, for
all integers $k\ge 0$,
\begin{equation}
\label{eq:B_Ik_sech_closed}
\int_0^\infty x^2\,\operatorname{sech}^{2k+2}(x)\,dx
=
\frac{2^{\,2k-1}(k!)^2}{(2k+1)!}
\left(\frac{\pi^2}{6}-\sum_{j=1}^{k}\frac{1}{j^2}\right),
\end{equation}
(with the convention $\sum_{j=1}^0(\cdot)=0$, so the $k=0$ case gives $\pi^2/12$).

For the unshaped normalized kernel
\[
Q_k'(u)=\frac{(1-u^2)^k}{\int_0^1(1-\xi^2)^k\,d\xi},
\qquad
\int_0^1(1-\xi^2)^k\,d\xi=\frac{2^{\,2k}(k!)^2}{(2k+1)!},
\]
the geometric moment \eqref{eq:B_M1_arctanh2} simplifies to the closed form
\[
\mathcal{M}_1[Q_k]
=
\sum_{j=1}^{k}\frac{1}{j^2}-\frac{\pi^2}{6}.
\]
In particular,
\[
\mathcal{M}_1[Q_1]=1-\frac{\pi^2}{6},\qquad
\mathcal{M}_1[Q_2]=\frac{5}{4}-\frac{\pi^2}{6},\qquad
\mathcal{M}_1[Q_3]=\frac{49}{36}-\frac{\pi^2}{6}.
\]

For general shaping $S(u)$, the same substitution produces weighted integrals of the form
\[\int_0^\infty x^2\,\operatorname{sech}^{2k+2}(x)\,S(\tanh x)\,dx\,,\]
for which no comparable closed form is available in
general; nevertheless the single-quadrature representation \eqref{eq:B_M1_arctanh2} remains robust and inexpensive.
\end{remark}

\subsection{Reduced formulas for $\Phi_1$ and the dynamic moment $\mathcal{J}_1[Q]$}
\label{sec:B_J1}

We now derive closed one-dimensional formulas for the dynamic moment \eqref{eq:B_J1_def}. The main analytical input is the
first-order inner problem from Appendix~\ref{app:A}. With
$\sigma(z)=\tanh(z/\sqrt2)$ and $\mathcal{L}=\partial_{zz}-W''(\sigma)$, the reduced profile $\Phi_1(z)$ solves
\begin{equation}
\mathcal{L}\Phi_1
=\frac{\sqrt{2}}{3}Q'(\sigma(z))-\sigma'(z),
\qquad
\Phi_1(0)=0,\qquad \Phi_1 \text{ bounded}.
\label{eq:B_Phi1_ODE}
\end{equation}
Equivalently, $\mathcal{L}\Phi_1=A_1(z)\sigma'(z)$ with
\[
A_1(z)=\frac{\sqrt{2}Q'(\sigma(z))}{3\sigma'(z)}-1.
\]

\subsubsection{Integral representation (Bretin; Zhou)}

A reduction-of-order / variation-of-constants representation for the bounded solution of \eqref{eq:B_Phi1_ODE} appears already
in Bretin \emph{et al.}~\cite{bretin_2022} (see Lemma~4.1 and formula~(4.14)) and is revisited and streamlined in the ACH--IC
setting by Zhou \emph{et al.}~\cite{zhou_2025_achic}.

\begin{theorem}[Variation-of-constants formula]
\label{thm:Zhou_formula}
If $\Phi_1$ solves \eqref{eq:B_Phi1_ODE} and is bounded with $\Phi_1(0)=0$, then
\begin{equation}
\Phi_1(z)
=
\sigma'(z)\int_0^{z}\frac{1}{\sigma'(\eta)^2}
\left(\int_0^{\eta} A_1(\xi)\,\sigma'(\xi)^2\,d\xi\right)\,d\eta.
\label{eq:B_Zhou_formula}
\end{equation}
\end{theorem}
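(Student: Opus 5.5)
The plan is a standard reduction of order built on the kernel element $\sigma'$, followed by a careful use of boundedness and the gauge to fix the integration constants. Since $\sigma'>0$ on $\mathbb{R}$ and $\mathcal{L}\sigma'=0$ (differentiate \eqref{eq:AC_profile_ob}), I write any solution as $\Phi_1=\sigma' w$. Then
\[
\mathcal{L}(\sigma' w)=\sigma' w''+2\sigma'' w'=\frac{1}{\sigma'}\bigl((\sigma')^2 w'\bigr)' ,
\]
so \eqref{eq:B_Phi1_ODE}, written as $\mathcal{L}\Phi_1=A_1\sigma'$, becomes the first-order equation $\bigl((\sigma')^2w'\bigr)'=A_1(\sigma')^2$. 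Integrating once from $0$ gives
\[
(\sigma')^2(\eta)\,w'(\eta)=c+\int_0^\eta A_1(\xi)\sigma'(\xi)^2\,d\xi ,
\]
and integrating again,
\[
w(z)=w(0)+\int_0^z\frac{c+\int_0^\eta A_1\sigma'^2\,d\xi}{\sigma'(\eta)^2}\,d\eta .
\]
The gauge $\Phi_1(0)=0$ together with $\sigma'(0)\neq0$ forces $w(0)=0$. It remains to show that boundedness forces $c=0$.

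\textbf{Main step: determining $c$.} First I check that $\int_{-\infty}^{\infty}A_1\sigma'^2\,dz=0$. This integral equals
\[
\frac{\sqrt2}{3}\int Q'(\sigma)\sigma'\,dz-\int\sigma'^2\,dz=\frac{\sqrt2}{3}\cdot2-\frac{2\sqrt2}{3}=0,
\]
using \eqref{eq:cN_ob} and \eqref{eq:cWcM_ob}; this is exactly the Fredholm solvability condition. Consequently
\[
g(\eta):=\int_0^\eta A_1\sigma'^2\,d\xi\to0 \quad\text{as } \eta\to\pm\infty .
\]
I also need the decay rate of $g$. Since $\sigma'\sim 2\sqrt2\,e^{-\sqrt2|\eta|}$ and $Q'$ is bounded, the integrand $A_1\sigma'^2=\tfrac{\sqrt2}{3}Q'(\sigma)\sigma'-\sigma'^2$ is $O(e^{-\sqrt2|\xi|})$, so $g(\eta)=O(e^{-\sqrt2|\eta|})$.

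Now consider $c\neq0$. As $z\to+\infty$ the integrand of $w$ behaves like $c\,e^{2\sqrt2\eta}/8$, so $w(z)\sim c\,e^{2\sqrt2 z}/(16\sqrt2)$. Then $\Phi_1=\sigma' w\sim \tfrac{c}{4}e^{\sqrt2 z}$, which is unbounded. Hence $c=0$.

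With $c=0$ I verify that the resulting $\Phi_1$ is indeed bounded: $|g(\eta)|/\sigma'(\eta)^2=O(e^{\sqrt2|\eta|})$, so $|w(z)|=O(e^{\sqrt2|z|})$, and multiplying by $\sigma'=O(e^{-\sqrt2|z|})$ gives $\Phi_1=O(1)$. Uniqueness follows because two bounded solutions with the same gauge differ by a bounded element of the solution space of $\mathcal{L}u=0$. That space is spanned by $\sigma'$ and $\sigma'\int\sigma'^{-2}$, the latter unbounded, so the difference is a multiple of $\sigma'$, and the gauge kills it. This yields \eqref{eq:B_Zhou_formula}.

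The obstacle I expect is the bookkeeping of the asymptotic rates. The step is only delicate where the endpoint behaviour of $Q'$ is involved: bounded $Q'$ gives $g$ at least the decay of $\sigma'$, and that is precisely what is needed to cancel the $\sigma'^{-2}$ growth.
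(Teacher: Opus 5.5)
Your reduction of order ($\Phi_1=\sigma' w$, $\bigl((\sigma')^2w'\bigr)'=A_1(\sigma')^2$, the gauge giving $w(0)=0$, boundedness fixing the second constant) is the construction the paper invokes in its one-line proof. You also go further than the paper by explaining why the inner integral in \eqref{eq:B_Zhou_formula} is based at $0$. That explanation, however, contains a non sequitur. The identity $\int_{-\infty}^{\infty}A_1\sigma'^2\,dz=0$ only gives $g(+\infty)=g(-\infty)$; it does not give $g(\pm\infty)=0$. Boundedness at $+\infty$ forces $c=-g(+\infty)$, and boundedness at $-\infty$ forces $c=-g(-\infty)$. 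The Fredholm condition only makes these two requirements compatible. Getting $c=0$ needs the separate one-sided identity $\int_0^{\infty}A_1\sigma'^2\,dz=0$. Your unboundedness argument for $c\neq0$ depends on this, since the integrand of $w$ actually behaves like $(c+g(+\infty))/\sigma'^2$. Your decay estimate $g(\eta)=O(e^{-\sqrt2|\eta|})$ depends on it as well.

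The missing line is parity. Since $Q$ is odd, $Q'$ is even, and with $\sigma$ odd and $\sigma'$ even the integrand $A_1\sigma'^2=\frac{\sqrt2}{3}Q'(\sigma)\sigma'-\sigma'^2$ is even. Each half-line integral is therefore half of zero. Equivalently, $g(\eta)=\frac{\sqrt2}{3}\bigl(Q(\sigma(\eta))-Q(0)-Q_1(\sigma(\eta))\bigr)$, which vanishes at both ends because $Q(0)=0$ and $Q(\pm1)=Q_1(\pm1)=\pm1$. This point matters: for a kernel with $Q(1)-Q(-1)=2$ but $Q(1)-Q(0)\neq1$, the Fredholm condition still holds, yet the formula based at $0$ is unbounded. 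With this line inserted your proof is complete. The prefactor in $\Phi_1\sim\frac{c}{4}e^{\sqrt2 z}$ should be $\frac{c}{8}$, which does not affect the argument.
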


\begin{proof}
This is the standard reduction-of-order construction for the self-adjoint operator $\mathcal{L}$, using that
$\sigma'\in\ker(\mathcal{L})$ and fixing the homogeneous component by the gauge $\Phi_1(0)=0$.
\end{proof}

\subsubsection{Single-quadrature reduction for $\Phi_1$}

The double integral \eqref{eq:B_Zhou_formula} can be reduced to a single integral in the phase variable $u=\sigma(z)$.

\begin{theorem}[Reduced formula for $\Phi_1$]
\label{thm:Phi1_reduced}
Define the NMN reference kernel
\[
Q_1(u)=\frac{3}{2}\left(u-\frac{u^3}{3}\right),
\qquad
Q_1'(u)=\frac{3}{2}(1-u^2).
\]
Then the bounded solution of \eqref{eq:B_Phi1_ODE} is
\begin{equation}
\Phi_1(z)
=
\frac{4}{3}\,\sigma'(z)\int_0^{\sigma(z)}\frac{Q(u)-Q_1(u)}{(1-u^2)^3}\,du.
\label{eq:B_Phi1_reduced}
\end{equation}
\end{theorem}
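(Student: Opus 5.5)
We start from the variation-of-constants formula of Theorem~\ref{thm:Zhou_formula} and convert both integrals to the phase variable $u=\sigma$. Throughout we use $\sigma'=\tfrac{1}{\sqrt2}(1-\sigma^2)$, so $d\xi=\sqrt2\,du/(1-u^2)$.

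\textbf{Inner integral.} Since $A_1\sigma'^2=\bigl(\tfrac{\sqrt2}{3}Q'(\sigma)-\sigma'\bigr)\sigma'$, the substitution $w=\sigma(\xi)$ gives
\[
\int_0^\eta A_1(\xi)\sigma'(\xi)^2\,d\xi=\int_0^{\sigma(\eta)}\Bigl(\tfrac{\sqrt2}{3}Q'(w)-\tfrac{1}{\sqrt2}(1-w^2)\Bigr)dw=F(\sigma(\eta)),
\]
with $F$ as in \eqref{eq:Fdef}. We then check the identity $\tfrac{1}{\sqrt2}(1-w^2)=\tfrac{\sqrt2}{3}Q_1'(w)$. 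This yields $F(u)=\tfrac{\sqrt2}{3}(Q(u)-Q_1(u))$, and in particular $F(\pm1)=0$, because $Q(\pm1)=Q_1(\pm1)=\pm1$. The vanishing of $F$ at $\pm1$ is precisely the solvability (Fredholm) condition.

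\textbf{Outer integral.} Substitute $u=\sigma(\eta)$, so that $d\eta=\sqrt2\,du/(1-u^2)$ and $\sigma'(\eta)^{-2}=2/(1-u^2)^2$. This gives
\[
\int_0^z\frac{F(\sigma(\eta))}{\sigma'(\eta)^2}\,d\eta=2\sqrt2\int_0^{\sigma(z)}\frac{F(u)}{(1-u^2)^3}\,du.
\]
Inserting $F=\tfrac{\sqrt2}{3}(Q-Q_1)$ produces the prefactor $2\sqrt2\cdot\tfrac{\sqrt2}{3}=\tfrac43$. Multiplying by $\sigma'(z)$ then gives \eqref{eq:B_Phi1_reduced}, and the gauge $\Phi_1(0)=0$ holds trivially.

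\textbf{Verifying boundedness.} This is the main point, since Theorem~\ref{thm:Zhou_formula} is stated for the bounded solution. We can also verify directly that the right-hand side is bounded. Near $u=1$, write $F(u)=-\int_u^1 F'$. We have $F'(u)=\tfrac{\sqrt2}{3}(Q'(u)-Q_1'(u))=O(1-u)$ as long as $Q'$ vanishes at $\pm1$ (i.e.\ $k\ge1$), so $F(u)=O((1-u)^2)$. Consequently the integrand is $O((1-u)^{-1})$, and the integral grows at most like $|\log(1-\sigma)|\sim |z|$. Since $\sigma'\sim e^{-\sqrt2|z|}$, the product $\sigma'(z)\cdot O(|z|)$ tends to $0$, so $\Phi_1$ is bounded; the case $u\to-1$ is symmetric by oddness.

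\textbf{Uniqueness.} A bounded solution is unique up to adding $c\,\sigma'$, since the second homogeneous solution grows like $e^{\sqrt2|z|}$. The gauge $\Phi_1(0)=0$ with $\sigma'(0)\neq0$ forces $c=0$. Hence the formula is the bounded solution regardless of which representation is used.

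The only delicate step is this endpoint estimate: it relies on $F(\pm1)=0$ together with $Q'(\pm1)=0$, and if $k=0$ one instead gets $F=O(1-u)$, so the integrand is $O((1-u)^{-2})$ and still only of size $e^{\sqrt2|z|}$, which $\sigma'$ exactly compensates to give a bounded result.
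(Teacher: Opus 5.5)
Your proposal is correct and follows the paper's proof: you rewrite the inner integral via $w=\sigma(\xi)$ as $F(\sigma(\eta))=\tfrac{\sqrt2}{3}\bigl(Q-Q_1\bigr)(\sigma(\eta))$, then substitute $u=\sigma(\eta)$ in the outer integral to obtain the prefactor $2\sqrt2\cdot\tfrac{\sqrt2}{3}=\tfrac43$. Your endpoint estimate based on $F(\pm1)=0$ and your uniqueness-modulo-$\sigma'$ argument are not spelled out in the paper; both are correct, and they justify calling the formula \emph{the} bounded solution.
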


\begin{proof}
Starting from \eqref{eq:B_Zhou_formula}, substitute $u=\sigma(\xi)$ and use
$\sigma'=\frac{1-\sigma^2}{\sqrt2}$ and $d\xi=\frac{\sqrt2}{1-u^2}\,du$.
A direct computation gives
\[
\int_0^\eta A_1(\xi)\sigma'(\xi)^2\,d\xi
=
\frac{\sqrt2}{3}\bigl(Q(\sigma(\eta))-Q_1(\sigma(\eta))\bigr).
\]
Substituting this expression back into \eqref{eq:B_Zhou_formula} and changing variables once more from $\eta$ to
$u=\sigma(\eta)$ yields \eqref{eq:B_Phi1_reduced}.
\end{proof}

\subsubsection{A closed one-dimensional formula for $\mathcal{J}_1[Q]$}

\begin{theorem}[Single-integral formula for the dynamic moment]
\label{thm:J1_single}
Assume $Q(\pm1)=\pm1$ and that $Q$ is odd and strictly increasing.
Then
\begin{equation}
\mathcal{J}_1[Q]
=
\frac{8}{3}\int_0^1
\frac{\bigl(Q(u)-Q_1(u)\bigr)\,\bigl(1-Q(u)\bigr)}{(1-u^2)^3}\,du.
\label{eq:B_J1_single}
\end{equation}
\end{theorem}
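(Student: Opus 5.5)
Starting from the definition \eqref{eq:B_J1_def}, $\mathcal{J}_1[Q]=\int_{-\infty}^{\infty}\Phi_1(z)\,Q'(\sigma(z))\,dz$, I will insert the reduced representation \eqref{eq:B_Phi1_reduced} of Theorem~\ref{thm:Phi1_reduced}. Set
\[
G(v):=\int_0^{v}\frac{Q(u)-Q_1(u)}{(1-u^2)^3}\,du,
\qquad\text{so that}\qquad
\Phi_1(z)=\tfrac43\,\sigma'(z)\,G(\sigma(z)).
\]
Since $Q$ and $Q_1$ are odd, the numerator $Q-Q_1$ is odd and $G$ is even. Moreover $Q'$ is even and $\sigma$ is odd, so the integrand $\Phi_1\,Q'(\sigma)$ is even in $z$. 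Hence $\mathcal{J}_1=2\int_0^\infty\Phi_1 Q'(\sigma)\,dz$. Changing variables $v=\sigma(z)$ with $\sigma'(z)\,dz=dv$ gives
\[
\mathcal{J}_1[Q]=\frac83\int_0^1 Q'(v)\,G(v)\,dv .
\]

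The second step is a Fubini swap, exactly parallel to the proof of Theorem~\ref{thm:B_M1_arctanh2}. Writing $G(v)=\int_0^v g(u)\,du$ with $g=(Q-Q_1)/(1-u^2)^3$, I interchange the order of integration over $0<u<v<1$. The inner integral is $\int_u^1 Q'(v)\,dv=1-Q(u)$, by the normalization $Q(1)=1$. This yields
\[
\frac83\int_0^1 \frac{\bigl(Q(u)-Q_1(u)\bigr)\bigl(1-Q(u)\bigr)}{(1-u^2)^3}\,du,
\]
which is \eqref{eq:B_J1_single}. Equivalently, one can integrate by parts using $\frac{d}{dv}\bigl(-(1-Q(v))\bigr)=Q'(v)$: the boundary term at $v=0$ vanishes because $G(0)=0$, and the term at $v=1$ is $(1-Q(v))\,G(v)\to0$, which must be checked.

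The main obstacle is justifying Fubini, or equivalently the vanishing boundary term, near $u=1$. The weight $(1-u^2)^{-3}$ is strongly singular there, so $G(v)$ itself may blow up as $v\to1$ unless $Q-Q_1$ decays fast enough. The step therefore needs endpoint estimates.

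Note that $1-Q_1(u)=\tfrac12(1-u)^2(2+u)$ vanishes to second order at $u=1$. For the design class \eqref{eq:B_Q_general} with $k\ge1$, $1-Q(u)=\mathcal{O}((1-u)^{k+1})$ and so decays at least quadratically as well. Hence $Q-Q_1=(1-Q_1)-(1-Q)=\mathcal{O}((1-u)^2)$ and $G(v)=\mathcal{O}(|\ln(1-v)|)$ at worst. Multiplied by $(1-Q)=\mathcal{O}((1-v)^2)$, this gives an integrand of order $\mathcal{O}((1-u)^{1})$ near $u=1$, which is integrable, and the boundary term tends to zero. So the integrals converge absolutely and Tonelli–Fubini applies.

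For a general odd strictly increasing $Q$ under the stated hypotheses, I would argue the same way but more carefully. Boundedness of $\Phi_1$, assumed in \eqref{eq:B_Phi1_ODE}, forces $\sigma' G(\sigma)=(1-\sigma^2)\,G(\sigma)/\sqrt2$ to stay bounded. Hence $|G(v)|\lesssim(1-v)^{-1}$, and combined with $1-Q(v)=o(1)$ this controls the boundary term. If this bound proves insufficient, I would state the result under the mild decay assumption $1-Q(u)=\mathcal{O}((1-u)^2)$, which all kernels in Section~\ref{sec:design} satisfy.
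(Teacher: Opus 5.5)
Your proposal is correct and follows the paper's proof. You insert the reduced formula for $\Phi_1$, use evenness and $u=\sigma(z)$ to reach $\frac83\int_0^1 Q'(v)G(v)\,dv$, and then interchange the order of integration using $\int_u^1 Q'(v)\,dv=1-Q(u)$. Your integrability check near $u=1$ for the design class with $k\ge 1$ is a justification the paper omits. For fully general $Q$, however, $|G(v)|\lesssim(1-v)^{-1}$ and $1-Q(v)=o(1)$ do not by themselves force $(1-Q(v))G(v)\to0$: $Q(u)=u$ meets the stated hypotheses, yet gives $(1-Q(v))G(v)\to-\tfrac18$ and makes both sides diverge. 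So your fallback decay assumption is the right way to close that case.
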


\begin{proof}
Insert \eqref{eq:B_Phi1_reduced} into \eqref{eq:B_J1_def} and use $u=\sigma(z)$:
\[
\mathcal{J}_1[Q]
=
2\int_0^\infty Q'(\sigma(z))\Phi_1(z)\,dz
=
\frac{8}{3}\int_0^1 Q'(u)\left(\int_0^u \frac{Q(v)-Q_1(v)}{(1-v^2)^3}\,dv\right)\,du.
\]
Interchanging the order of integration (Fubini) yields
\[
\mathcal{J}_1[Q]
=
\frac{8}{3}\int_0^1
\frac{Q(v)-Q_1(v)}{(1-v^2)^3}
\left(\int_v^1 Q'(u)\,du\right)\,dv
=
\frac{8}{3}\int_0^1
\frac{\bigl(Q(v)-Q_1(v)\bigr)\,\bigl(1-Q(v)\bigr)}{(1-v^2)^3}\,dv,
\]
since $\int_v^1 Q'(u)\,du=Q(1)-Q(v)=1-Q(v)$.
\end{proof}

\begin{remark}[Integrability and the role of $k\ge 2$]
Near $u=1$, one has $1-Q(u)\sim C(1-u^2)^{k+1}$ for the family \eqref{eq:B_Q_general}.
The integrand in \eqref{eq:B_J1_single} behaves like $(1-u^2)^{k-1}$, which is integrable if and only if $k\ge 2$.
Thus $k\ge 2$ is the natural threshold for a finite $\mathcal{J}_1$ and for bulk error suppression of order
$\varepsilon^{k+1}\ge\varepsilon^3$.
\end{remark}

\subsection{Moment balance and $\mathcal{O}(\varepsilon^3)$ volume accuracy}
\label{sec:B_balance}

Define the combined interfacial coefficient
\begin{equation}
\mathcal{C}_1[Q]:=\mathcal{M}_1[Q]+\mathcal{J}_1[Q].
\label{eq:B_C1_def}
\end{equation}
By \eqref{eq:B_E2_simplified}, the leading interfacial volume error is proportional to $\mathcal{C}_1[Q]$.

\begin{theorem}[Moment-balance condition]
\label{thm:moment_balance}
Suppose $k\ge 2$ (so that the bulk contribution is $\mathcal{O}(\varepsilon^{k+1})=\mathcal{O}(\varepsilon^3)$ and
$\mathcal{J}_1$ is finite). If
\begin{equation}
\mathcal{C}_1[Q]=\mathcal{M}_1[Q]+\mathcal{J}_1[Q]=0,
\label{eq:B_balance_condition}
\end{equation}
then the total volume error satisfies $\ErrV(t)=\mathcal{O}(\varepsilon^3)$.
\end{theorem}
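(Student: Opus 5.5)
The plan is to start from the bulk--interface decomposition of Section~\ref{sec:B_decomp}, with $\delta=C\varepsilon|\ln\varepsilon|$, and write
\[
\ErrV(t)=\ErrV^{\mathrm{int}}(t)+\ErrVbulk(t)+\text{(matching remainder)},
\]
then bound each piece separately. The argument is formal, at the level of the matched asymptotics of Appendix~\ref{app:A}: the inner expansion \eqref{eq:B_inner_phi_exp} is assumed valid to $\mathcal{O}(\varepsilon^2)$ uniformly in the tube, and the outer expansion \eqref{eq:outer_phi_exp_compact} holds in $\Omega^\pm_\delta$.

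\textbf{Bulk piece.} In $\Omega^+_\delta$ the Heaviside term equals $1$, so the integrand is $\tfrac12(Q(\phi_\varepsilon)-1)$. I would use $1-Q(\phi)=\int_\phi^1Q'\sim C(1-\phi)^{k+1}$ from \eqref{eq:B_Q_general}. The outer solution satisfies $\phi_\varepsilon=\pm1+\varepsilon\phi_1+\dots$, and degeneracy of $M$ together with $\mu_{-1}=0$ gives $1-|\phi_\varepsilon|=\mathcal{O}(\varepsilon)$ at worst. The integrand is therefore $\mathcal{O}(\varepsilon^{k+1})$, and $\ErrVbulk=\mathcal{O}(\varepsilon^{k+1})\subset\mathcal{O}(\varepsilon^3)$ for $k\ge2$. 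The same bound holds symmetrically in $\Omega^-_\delta$.

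\textbf{Interfacial piece.} Write $\dOm=\varepsilon\mathcal J\,dz\,dA$ with \eqref{eq:B_Jacobian_expand}. Expand $\tfrac12(1+Q(\Phi))-\Heav(z)$ to second order. The $\mathcal{O}(\varepsilon)$ term vanishes by the parity argument of Section~\ref{sec:B_decomp}. The $\mathcal{O}(\varepsilon^2)$ term equals $\tfrac{\varepsilon^2}{2}\int_\Gamma H(\mathcal M_1+\mathcal J_1)\,dA$ by \eqref{eq:B_E2_simplified}, with the two moments finite: $\mathcal M_1$ by Theorem~\ref{thm:B_M1_arctanh2}, and $\mathcal J_1$ by Theorem~\ref{thm:J1_single} together with the integrability remark for $k\ge2$. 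Under \eqref{eq:B_balance_condition} this term vanishes identically in $s$, so no geometric assumption on $H$ is needed.

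\textbf{Main obstacle: remainders.} The remaining work is to show that everything else is $\mathcal{O}(\varepsilon^3)$. There are two sources.
\begin{enumerate}[label=(\alph*)]
\item The $\varepsilon^3$ contributions from $\varepsilon^2z^2G$, $\varepsilon^2\Phi_2 Q'(\sigma)$, $\tfrac12\varepsilon^2H^2\Phi_1^2Q''(\sigma)$, and $\varepsilon zH\cdot\varepsilon H\Phi_1Q'(\sigma)$. Each is integrated over $|z|\le C|\ln\varepsilon|$, so I need the $z$-integrals to converge on $\mathbb R$. This follows from the exponential decay of $1+Q(\sigma)-2\Heav$, of $Q'(\sigma)\sim e^{-\sqrt2(k+1)|z|}$, and of $Q''(\sigma)$. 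The function $\Phi_1$ itself may grow at most polynomially, or tend to a constant, as can be read off from \eqref{eq:B_Phi1_reduced}: the integrand $(Q-Q_1)/(1-u^2)^3\sim(1-u^2)^{-2}$ gives $\Phi_1\sim\sigma'\,e^{2\sqrt2|z|}$, i.e. growth like $e^{\sqrt2|z|}$. This growth is compensated by $Q'(\sigma)$ only when $k\ge2$. I expect this decay bookkeeping to be the delicate step, since $\Phi_1$ is not bounded in general. I would handle it by arguing that the only relevant products always carry a factor $Q'(\sigma)$ or $Q''(\sigma)$, whose decay rate $\sqrt2(k+1)\ge3\sqrt2$ beats the growth of $\Phi_1$ and of $\Phi_1^2$.
\item The cutoff at $|z|=\delta/\varepsilon$. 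Truncating the moment integrals there costs $\varepsilon^{m}$ for a large $m$ once $C$ is chosen large. The overlap mismatch between inner and outer expansions is controlled by Van Dyke matching \eqref{eq:matching_compact}.
\end{enumerate}
Summing the bulk piece, the interfacial piece, and these remainders gives $\ErrV(t)=\mathcal{O}(\varepsilon^3)$.
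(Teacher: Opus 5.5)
\textbf{Overall.} Your decomposition is the paper's argument. The theorem follows from \eqref{eq:EVexpansion}, with $C_0[Q]=0$ by parity, together with the vanishing of $\tfrac{\varepsilon^2}{2}\,\mathcal{C}_1[Q]\int_{\Gamma}H\,dA$ under \eqref{eq:B_balance_condition} and Theorem~\ref{thm:B_bulk_error} for $\ErrVbulk$. The paper simply asserts the $\mathcal{O}(\varepsilon^3)$ remainder. Where you go further, in step (a), which you flag as the crux, your asymptotics are wrong. With the correct decay rates, your compensation argument would break down exactly at $k=2$.

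\textbf{The error in step (a).} First, $\Phi_1$ is not unbounded. Since $Q(1)=Q_1(1)=1$ and $Q'(1)=Q_1'(1)=0$, the difference $Q-Q_1$ vanishes to \emph{second} order at $u=1$. For $k\ge2$, $Q-Q_1\sim\tfrac32(1-u)^2$, coming entirely from $1-Q_1(u)=\tfrac12(1-u)^2(2+u)$. Hence the integrand in \eqref{eq:B_Phi1_reduced} behaves like $\tfrac{3}{16}(1-u)^{-1}$, the inner integral grows only like $\tfrac{3\sqrt2}{16}|z|$, and $\Phi_1(z)\sim|z|\,e^{-\sqrt2|z|}$. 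The ODE \eqref{eq:B_Phi1_ODE} says the same thing. Its right-hand side is asymptotic to $-\sigma'\sim-2\sqrt2\,e^{-\sqrt2|z|}$, which is resonant with the decaying mode of $\partial_{zz}-2$. The growing mode $e^{\sqrt2|z|}$ is exactly what the boundedness condition removes. Even granting your $(1-u^2)^{-2}$, integrating in $u$ would give $\Phi_1=\mathcal{O}(1)$, not $e^{\sqrt2|z|}$.

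Second, for \eqref{eq:B_Q_general} with $S(\pm1)>0$, one has $Q'(\sigma)\sim e^{-\sqrt2 k|z|}$ and $Q''(\sigma)\sim e^{-\sqrt2(k-1)|z|}$. The rate $\sqrt2(k+1)$ you quote is that of $1-Q(\sigma)$. So if your growth claim were true, $\Phi_1^2Q''(\sigma)$ would behave like $e^{\sqrt2(3-k)|z|}$. That is not integrable on $\mathbb{R}$ for $k=2$ or $k=3$, so the step would fail in exactly the case the theorem targets.

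\textbf{The fix.} With the correct behavior of $\Phi_1$, every remainder integrand decays exponentially and (a)--(b) become routine. In fact $z^2\bigl(\tfrac12(1+Q(\sigma))-\Heav(z)\bigr)$, $z\,\Phi_1Q'(\sigma)$ and $\Phi_1^2Q''(\sigma)$ are all odd in $z$, because $\Phi_1$ is even. These three contributions therefore vanish on the symmetric cutoff interval. That leaves only the $\Phi_2$ term, which both you and the paper accept at the formal level. The same second-order vanishing of $Q-Q_1$ shows that the integrand of \eqref{eq:B_J1_single} is $\mathcal{O}((1-u)^k)$. So $k\ge2$ is needed for the bulk bound, not for finiteness of $\mathcal{J}_1$.
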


\begin{remark}[Sign mechanism]
By Lemma~\ref{lem:M1negative_appendix}, admissible kernels have $\mathcal{M}_1[Q]<0$.
For many kernels with $k\ge 2$ one finds $\mathcal{J}_1[Q]>0$, so cancellation through
\eqref{eq:B_balance_condition} is feasible within the strictly positive class $S>0$.
\end{remark}

\begin{remark}[Example: exponential shaping]
For the one-parameter family $S(u)=\exp(-b_2 u^2)$ with $k=2$, the balance equation \eqref{eq:B_balance_condition} admits a root
(under our normalization convention) at approximately $b_2\approx 6.95$. This value is included only as an illustration; the
main text reports the numerical optimization results.
\end{remark}

\subsection{Bulk contribution and degeneracy order}
\label{sec:B_bulk}

The interfacial analysis above determines the leading $\mathcal{O}(\varepsilon^2)$ term. The bulk contribution is governed by
the endpoint degeneracy of $Q$.

\begin{theorem}[Bulk error scaling]
\label{thm:B_bulk_error}
If $Q'(\pm1)=Q''(\pm1)=\cdots=Q^{(k)}(\pm1)=0$ (degeneracy order $k$), then the bulk contribution satisfies
\[
\ErrV^{\mathrm{bulk}}(t)=\mathcal{O}(\varepsilon^{k+1}).
\]
\end{theorem}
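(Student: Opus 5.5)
The plan is to isolate the bulk part of $\ErrV$, namely the integral over $\Omega^\pm_\delta(t)=\{\pm d>\delta\}$ with $\delta=C\varepsilon|\ln\varepsilon|$, and show it is controlled by how fast $1\mp Q(\phi)$ vanishes as $\phi\to\pm1$. On $\Omega^+_\delta$ the sharp indicator equals $1$, so the integrand is $\tfrac12(1+Q(\phi))-1=-\tfrac12(1-Q(\phi))$. On $\Omega^-_\delta$ it is $\tfrac12(1+Q(\phi))$. By oddness of $Q$, both cases reduce to estimating $1-Q(|\phi|)$ when $|\phi|$ is near $1$.

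\textbf{Step 1 (Taylor at the endpoint).} Since $Q(1)=1$ and $Q'(1)=\dots=Q^{(k)}(1)=0$, Taylor's theorem with remainder gives
\[
1-Q(\phi)=\frac{(-1)^{k}}{(k+1)!}\,Q^{(k+1)}(\xi)\,(1-\phi)^{k+1}\quad\text{for some }\xi\in(\phi,1).
\]
Hence $|1-Q(\phi)|\le C_Q\,|1-\phi|^{k+1}$ uniformly on $[0,1]$, and by symmetry $|1+Q(\phi)|\le C_Q|1+\phi|^{k+1}$ on $[-1,0]$. For the family \eqref{eq:B_Q_general} this is directly visible from $1-Q(u)=\int_u^1 Q'\sim C(1-u^2)^{k+1}$.

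\textbf{Step 2 (outer expansion).} I use the outer expansion \eqref{eq:outer_phi_exp_compact} with $\phi_0=\pm1$ in $\Omega^\pm$, so $1\mp\phi=\mathcal{O}(\varepsilon)$ uniformly in the bulk. The bulk equations permit this: with $\Upsilon_{-1}=\mu_{-1}=0$, the outer chemical-potential equation at leading order forces $W'(\phi_0)=0$, and stability selects $\phi_0=\pm1$. The factor $\varepsilon$ comes from the first outer correction $\phi_1$ driven by $\mu_0$. Combined with Step 1, this gives $|\tfrac12(1\mp Q(\phi))|\le C\varepsilon^{k+1}$ pointwise on $\Omega^\pm_\delta$. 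Integrating over a bounded domain then yields $\ErrVbulk=\mathcal{O}(\varepsilon^{k+1})$.

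\textbf{Step 3 (main obstacle: the matching zone).} The delicate part is uniformity up to $|d|=\delta$, where the inner profile decays only like $1-\sigma(z)\sim 2e^{-\sqrt2 z}$. At $z=\delta/\varepsilon=C|\ln\varepsilon|$ this gives $1-\sigma\sim\varepsilon^{\sqrt2 C}$, so choosing $C$ large makes the transition contribution $\mathcal{O}(\varepsilon^{(k+1)\sqrt2C})$, which is negligible.

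One must also ensure that the outer deviation $\varepsilon\phi_1$ is genuinely $\mathcal{O}(\varepsilon)$ and not larger, and that $|\phi|\le1$ or a mild overshoot does not spoil the Taylor bound. I would handle overshoot by extending $Q$ smoothly past $\pm1$, so the estimate $|1-Q(\phi)|\le C|1-\phi|^{k+1}$ holds on a neighborhood.

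The result is the asserted scaling, conditional, as in the rest of the appendix, on the validity of the formal expansions.
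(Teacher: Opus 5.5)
Your argument is correct at the paper's formal level. You use a Taylor expansion at $\pm1$, which gives $|1\mp Q(\phi)|\le C|1\mp\phi|^{k+1}$. You combine it with the uniform $\mathcal{O}(\varepsilon)$ deviation from $\pm1$ in $\Omega^\pm_\delta$, which follows from the postulated outer expansion with $\phi_0=\pm1$, and together these give $\ErrVbulk=\mathcal{O}(\varepsilon^{k+1})$. The paper states this theorem without a written proof, and the reasoning it implicitly relies on (cf.\ its remark that $1-Q(u)\sim C(1-u^2)^{k+1}$ near $u=1$, plus the outer expansion) is exactly this one.
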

\noindent
For $k=1$ (NMN-type kernels) the bulk and interfacial errors both enter at $\mathcal{O}(\varepsilon^2)$.
For $k\ge 2$ the bulk contribution is at least $\mathcal{O}(\varepsilon^3)$, and the leading error is purely interfacial,
controlled by $\mathcal{C}_1[Q]$.

\section{Discrete conservation and energy stability}\label{app:C}

This appendix records two structural properties of the proposed discretization:
(i) discrete conservation of the designed invariant $\int_\Omega Q(\phi)\,\dOm$; and
(ii) unconditional dissipation of the diffuse-interface energy for the \emph{variational} CH--IC subclass (i.e.\ $R\equiv 0$).
The arguments follow standard convex-splitting proofs for Cahn--Hilliard schemes, but we emphasize the discrete chain rule
needed for an exact $Q^{n+1}-Q^n$ update.
A constant mobility prefactor can be absorbed into the time scale, so we set it to~$1$ without loss of generality.

\subsection{Discrete $Q$-conservation}\label{app:C:conservation}

Consider the fully discrete finite-volume form of \eqref{eq:chic_mixed_Q} with implicit Euler time stepping,
\begin{equation}\label{eq:disc_Q_cons}
\frac{Q(\phi^{n+1})-Q(\phi^n)}{\delta t}
=
\nabla_h\cdot\Big(M(\phi^\star)\,\nabla_h \psi^{n+1}\Big),
\end{equation}
where $\nabla_h\cdot(\cdot)$ denotes the conservative FV divergence and $\phi^\star$ is any chosen evaluation
state (e.g.\ $\phi^n$ or a Picard iterate).
Summing \eqref{eq:disc_Q_cons} over all control volumes and using no-flux boundary conditions yields
\[
\sum_{i=1}^{N} Q(\phi_i^{n+1})\,|\Omega_i|
=
\sum_{i=1}^{N} Q(\phi_i^{n})\,|\Omega_i|,
\]
up to linear-solver tolerance. Hence the discrete invariant is preserved once the nonlinear solve converges.

\subsection{Unconditional energy dissipation for the variational CH--IC subclass}\label{app:C:energy}

We restrict to the variational CH--IC case $R(\phi,\nabla\phi)\equiv 0$ in \eqref{eq:chic_mixed_psi}.
In the nondimensionalization used in the main text we set the surface-tension prefactor to $1$, so the standard
diffuse-interface energy reads
\begin{equation}\label{eq:energy_appC}
E_\varepsilon[\phi]
=
\int_\Omega \Big(\frac{1}{\varepsilon}W(\phi)+\frac{\varepsilon}{2}|\nabla \phi|^2\Big)\,\dOm.
\end{equation}
Let $W=W_c-W_e$ be a convex splitting with $W_c$ convex and $W_e$ concave \cite{eyre_1998}.

\paragraph{Discrete chain rule.}
Define the pointwise difference quotient
\begin{equation}\label{eq:Qhat_def}
\widehat{Q}'(\phi^{n+1},\phi^n)
:=
\begin{cases}
\dfrac{Q(\phi^{n+1})-Q(\phi^n)}{\phi^{n+1}-\phi^n}, & \phi^{n+1}\neq \phi^n,\\[8pt]
Q'(\phi^n), & \phi^{n+1}=\phi^n,
\end{cases}
\end{equation}
which is nonnegative whenever $Q$ is monotone increasing. By construction,
\begin{equation}\label{eq:disc_chain_rule}
Q(\phi^{n+1})-Q(\phi^n) = \widehat{Q}'(\phi^{n+1},\phi^n)\,(\phi^{n+1}-\phi^n)
\qquad\text{pointwise in }\Omega.
\end{equation}

\paragraph{Energy-stable fully discrete scheme.}
Consider the mixed scheme (the nonlinear ``ideal'' counterpart of the Picard-linearized implementation)
\begin{subequations}\label{eq:scheme_appC}
\begin{align}
\frac{Q(\phi^{n+1})-Q(\phi^n)}{\delta t}
&=
\nabla\cdot\!\Big(M(\phi^{n})\,\nabla \psi^{n+1}\Big),
\label{eq:scheme_appC_Q}
\\
\widehat{Q}'(\phi^{n+1},\phi^n)\,\psi^{n+1}
&=
\frac{1}{\varepsilon}\Big(W_c'(\phi^{n+1})-W_e'(\phi^n)\Big)\;-\;\varepsilon\,\Delta \phi^{n+1}.
\label{eq:scheme_appC_psi}
\end{align}
\end{subequations}

\begin{theorem}[Unconditional energy dissipation]\label{thm:energy_stability_appC}
Let $W=W_c-W_e$ with $W_c$ convex and $W_e$ concave, and assume no-flux boundary conditions.
Then \eqref{eq:scheme_appC} satisfies the discrete energy inequality
\begin{equation}\label{eq:energy_ineq_appC}
E_\varepsilon[\phi^{n+1}]-E_\varepsilon[\phi^{n}]
\;\le\;
-\delta t\int_\Omega M(\phi^n)\,|\nabla \psi^{n+1}|^2\,\dOm
\;-\;\frac{\varepsilon}{2}\int_\Omega |\nabla(\phi^{n+1}-\phi^n)|^2\,\dOm,
\end{equation}
for any $\delta t>0$. In particular, $E_\varepsilon[\phi^{n+1}]\le E_\varepsilon[\phi^n]$.
\end{theorem}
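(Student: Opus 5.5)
We follow the standard convex-splitting energy argument, with the discrete chain rule \eqref{eq:disc_chain_rule} replacing the usual identity $\partial_t\phi=\dots$. The plan is to test \eqref{eq:scheme_appC_Q} with $\delta t\,\psi^{n+1}$ and \eqref{eq:scheme_appC_psi} with $\phi^{n+1}-\phi^n$, both in $L^2(\Omega)$. This produces a mixed term that the two tests share. The fact that the weight $\widehat{Q}'$ multiplies $\psi^{n+1}$ is exactly what makes them coincide.

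First, test \eqref{eq:scheme_appC_Q} with $\delta t\,\psi^{n+1}$ and integrate by parts under no-flux conditions:
\[
\int_\Omega \bigl(Q(\phi^{n+1})-Q(\phi^n)\bigr)\psi^{n+1}\,\dOm=-\delta t\int_\Omega M(\phi^n)|\nabla\psi^{n+1}|^2\,\dOm.
\]
By \eqref{eq:disc_chain_rule} the left side equals $\int_\Omega \widehat{Q}'(\phi^{n+1},\phi^n)\,\psi^{n+1}(\phi^{n+1}-\phi^n)\,\dOm$. By \eqref{eq:scheme_appC_psi}, this in turn equals
\[
\int_\Omega\Bigl[\tfrac1\varepsilon\bigl(W_c'(\phi^{n+1})-W_e'(\phi^n)\bigr)(\phi^{n+1}-\phi^n)+\varepsilon\nabla\phi^{n+1}\cdot\nabla(\phi^{n+1}-\phi^n)\Bigr]\dOm,
\]
after integrating the Laplacian term by parts with $\mathbf n\cdot\nabla\phi=0$. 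This identification is pointwise, so it involves no division by $\widehat Q'$ and survives where $\widehat Q'$ is small or zero.

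Second, bound each piece from below by energy differences. For the gradient term we use the polarization identity $a\cdot(a-b)=\tfrac12|a|^2-\tfrac12|b|^2+\tfrac12|a-b|^2$. This gives $\tfrac\varepsilon2\|\nabla\phi^{n+1}\|^2-\tfrac\varepsilon2\|\nabla\phi^n\|^2+\tfrac\varepsilon2\|\nabla(\phi^{n+1}-\phi^n)\|^2$. For the potential, convexity of $W_c$ yields $W_c'(\phi^{n+1})(\phi^{n+1}-\phi^n)\ge W_c(\phi^{n+1})-W_c(\phi^n)$. For the explicit part we need $-W_e'(\phi^n)(\phi^{n+1}-\phi^n)\ge -\bigl(W_e(\phi^{n+1})-W_e(\phi^n)\bigr)$, which is the tangent-line inequality $W_e(b)\ge W_e(a)+W_e'(a)(b-a)$, i.e.\ convexity of $W_e$.

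This sign requirement is the point I expect to need care. The statement says ``$W_e$ concave'', but with $W=W_c-W_e$ the estimate needs $W_e$ convex. That is the Eyre splitting actually used in Section~\ref{sec:numerics}, where $W_e=\tfrac\beta2\phi^2$. I would read the hypothesis as $W=W_c+(-W_e)$ with $-W_e$ concave and note this convention explicitly.

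Combining the two steps gives
\[
E_\varepsilon[\phi^{n+1}]-E_\varepsilon[\phi^n]+\tfrac\varepsilon2\|\nabla(\phi^{n+1}-\phi^n)\|^2\le-\delta t\int_\Omega M(\phi^n)|\nabla\psi^{n+1}|^2\,\dOm,
\]
which is \eqref{eq:energy_ineq_appC}. No restriction on $\delta t$ enters, and nonnegativity of $M$ gives monotonicity. The only other subtlety is that $\widehat Q'\ge0$ is never actually used, since no division occurs. Monotonicity of $Q$ matters only for solvability of the scheme, which the theorem assumes.
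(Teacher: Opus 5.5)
Your proposal is correct and follows the paper's proof step for step. Like the paper, you test the $Q$-equation with $\psi^{n+1}$, convert it through the discrete chain rule \eqref{eq:disc_chain_rule}, and close with the tangent-line inequalities for the splitting and the polarization identity for the gradient term; your sign observation is also right. The paper's Step~3 cites ``concavity of $W_e$'' but actually writes the convex tangent-line inequality $W_e(\phi^{n+1})-W_e(\phi^n)\ge W_e'(\phi^n)(\phi^{n+1}-\phi^n)$, so the hypothesis should read ``$W_e$ convex'', as in the Eyre splitting $W_e=\tfrac{\beta}{2}\phi^2$ of Section~\ref{sec:numerics}.
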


\begin{proof}
\emph{Step 1: The $Q$-equation.}
Multiply \eqref{eq:scheme_appC_Q} by $\psi^{n+1}$ and integrate over $\Omega$.
Using integration by parts and the no-flux condition gives
\begin{equation}\label{eq:step1_appC}
\int_\Omega \psi^{n+1}\big(Q(\phi^{n+1})-Q(\phi^n)\big)\,\dOm
=
-\delta t\int_\Omega M(\phi^n)\,|\nabla \psi^{n+1}|^2\,\dOm.
\end{equation}

\emph{Step 2: Discrete chain rule.}
Multiply \eqref{eq:scheme_appC_psi} by $(\phi^{n+1}-\phi^n)$ and integrate.
By \eqref{eq:disc_chain_rule}, the left-hand side is exactly
\[
\int_\Omega \widehat{Q}'(\phi^{n+1},\phi^n)\,\psi^{n+1}(\phi^{n+1}-\phi^n)\,\dOm
=
\int_\Omega \psi^{n+1}\big(Q(\phi^{n+1})-Q(\phi^n)\big)\,\dOm.
\]
Hence,
\begin{align}
\int_\Omega \psi^{n+1}\big(Q(\phi^{n+1})-Q(\phi^n)\big)\,\dOm
&=
\frac{1}{\varepsilon}\int_\Omega\big(W_c'(\phi^{n+1})-W_e'(\phi^n)\big)(\phi^{n+1}-\phi^n)\,\dOm
\nonumber\\
&\qquad
-\varepsilon\int_\Omega \Delta \phi^{n+1}(\phi^{n+1}-\phi^n)\,\dOm.
\label{eq:step2_appC}
\end{align}

\emph{Step 3: Convexity/concavity inequalities.}
By convexity of $W_c$ and concavity of $W_e$,
\[
W_c(\phi^{n+1})-W_c(\phi^n)\le W_c'(\phi^{n+1})(\phi^{n+1}-\phi^n),
\qquad
W_e(\phi^{n+1})-W_e(\phi^n)\ge W_e'(\phi^n)(\phi^{n+1}-\phi^n).
\]
Subtracting yields
\begin{equation}\label{eq:Wsplitting_ineq_appC}
W(\phi^{n+1})-W(\phi^n)
\le
\big(W_c'(\phi^{n+1})-W_e'(\phi^n)\big)(\phi^{n+1}-\phi^n).
\end{equation}

\emph{Step 4: Gradient term.}
Integrating by parts in the Laplacian contribution gives
\[
-\int_\Omega \Delta \phi^{n+1}(\phi^{n+1}-\phi^n)\,\dOm
=
\int_\Omega \nabla \phi^{n+1}\cdot\nabla(\phi^{n+1}-\phi^n)\,\dOm
=
\frac12\Big(\|\nabla\phi^{n+1}\|_2^2-\|\nabla\phi^n\|_2^2+\|\nabla(\phi^{n+1}-\phi^n)\|_2^2\Big).
\]
Combining \eqref{eq:step1_appC}--\eqref{eq:step2_appC} with \eqref{eq:Wsplitting_ineq_appC} and the gradient identity
yields \eqref{eq:energy_ineq_appC}.
\end{proof}

\begin{remark}[Relation to the implemented Picard/block solve and to earlier general proofs]
The proof above is for the fully discrete nonlinear scheme \eqref{eq:scheme_appC}.
In practice we solve the corresponding system by Picard linearization and a preconditioned, block-coupled GMRES (or BiCGStab) method; once the nonlinear
loop converges, the discrete conservation identity \eqref{eq:disc_Q_cons} is satisfied to solver tolerance, and the energy
decay \eqref{eq:energy_ineq_appC} is observed in practice.
More general energy-stability results for weighted-metric variants (including additional variational terms in
\eqref{eq:chic_mixed_psi} and generalized prefactors) are given in \cite{musil_furst_2025_enhanced}; the same discrete
difference-quotient idea \eqref{eq:Qhat_def} provides the exact chain rule needed to close the dissipation estimate.
\end{remark}

\bibliographystyle{unsrt} 
\bibliography{phase_field_v2}
\end{document}